\tikzset{
  optree/.style={scale=.5,thick,grow'=up,level distance=10mm,inner sep=1pt},
  comp/.style={draw=none,circle,fill,line width=0,inner sep=0pt},
  dot/.style={draw,circle,fill,inner sep=0pt,minimum width=3pt},
  circ/.style={draw,circle,inner sep=1pt,minimum width=4mm},
  emptycirc/.style={draw,circle,inner sep=1pt,minimum width=2mm},
  root/.style={level distance=10mm,inner sep=1pt},
  leaf/.style={draw=none,circle,fill,line width=0,inner sep=0pt},
  nodot/.style={draw,circle,inner sep=1pt},
}
\newtheorem*{cnj*}{Conjecture}
\newtheorem*{thm*}{Theorem}
\newtheorem{thm}{Theorem}[section]
\newtheorem{crl}[thm]{Corollary}
\newtheorem{prp}[thm]{Proposition}
\theoremstyle{definition}
\newtheorem{dfn}[thm]{Definition}
\theoremstyle{remark}
\newtheorem{rmk}[thm]{Remark}
\newcommand{\PL}{\mathrm{PreLie}}
\newcommand{\Lie}{\mathrm{Lie}}
\newcommand{\CLie}{\mathrm{CycLie}}
\newcommand{\Com}{\mathrm{Com}}
\newcommand{\DLie}{\mathrm{Lie_2}}
\newcommand{\DCom}{\mathrm{Com_2}}
\newcommand{\PostLie}{\mathrm{PostLie}}
\newcommand{\Perm}{\mathrm{Perm}}
\newcommand{\Mag}{\mathrm{Mag}}
\newcommand{\CMag}{\mathrm{ComMag}}
\newcommand{\ANil}{\mathrm{SkewNil_2}}
\newcommand{\NAC}{\mathrm{NAC_2}}
\newcommand{\Dup}{\mathrm{Dup}}
\newcommand{\TriDend}{\mathrm{TriDend}}
\newcommand{\TriAss}{\mathrm{TriAss}}
\newcommand{\ComTriAss}{\mathrm{ComTriAss}}
\newcommand{\ash}{\textnormal{¡}}
\newcommand{\bbone}{\mathds{1}}
\newcommand{\Cob}{\mathbf{\Omega}}
\newcommand{\BB}{\mathbf{B}}
\newcommand{\rCob}{\bar{\mathbf{\Omega}}}
\newcommand{\rBar}{\bar{\mathbf{B}}}
\newcommand{\cC}{\mathcal{C}}
\newcommand{\cP}{\mathcal{P}}
\newcommand{\cQ}{\mathcal{Q}}
\newcommand{\cX}{\mathcal{X}}
\newcommand{\cY}{\mathcal{Y}}
\newcommand{\cT}{\mathcal{T}}
\newcommand{\cS}{\mathcal{S}}
\newcommand{\cR}{\mathcal{R}}
\newcommand{\cL}{\mathcal{L}}
\newcommand{\cM}{\mathcal{M}}
\newcommand{\unit}{\mathrm{e}}
\newcommand{\Fun}{\mathrm{Fun}}
\newcommand{\Hom}{\mathrm{Hom}}
\newcommand{\Set}{\mathrm{Set}}
\newcommand{\Vect}{\mathrm{Vect}}
\newcommand{\Spe}{\mathrm{Spe}}
\newcommand{\bbB}{\mathbb{B}}
\newcommand{\id}{\mathrm{id}}
\newcommand{\Sym}{\mathfrak{S}}
\title{Homological freeness criterion of Koszul operad and application to Cohen-Macalayness of posets}
\date{}
\author{Paul Laubie}
\begin{document}

\maketitle

\abstract{We show a variation of the usual homological freeness criterion for operadic modules over a Koszul operad. 
We then apply this result to decorated partition posets for some operads, showing that their augmentation is Cohen-Macaulay and computing its homology. 
This work answers several open questions asked by Bérénice Delcroix-Oger and Clément Dupont in a recent article.}

\section{Introduction}

The homological freeness criterion for graded modules was introduced by Eilenberg in the seminal paper \cite{HDim}.
This criterion is known to hold for operadic module, indeed a graded operadic right $\cP$-module $\cQ$ is free if and only if $\BB(\cQ,\bar{\cP},\bbone)$ is acyclic.
For example, this criterion is used in \cite{FunPBW} by Dotsenko, and Tamaroff to show the freeness of some operadic right modules.
We show a variation of this result for Koszul operads.
\begin{thm*}\ref{crl:main}
	Let $\cP\to \cQ$ a morphism of graded operads with $\cP$ and $\cQ$ Koszul.
	This morphism induces a free left (resp. right) $\cP$-module structure on $\cQ$ if and only if the homology of $\Cob(\cP^\ash,\cQ^\ash,\bbone)$ (resp. $\Cob(\bbone,\cQ^\ash,\cP^\ash)$) is concentrated in the diagonal. 
	Moreover, in this case, $H(\Cob(\cP^\ash,\cQ^\ash,\bbone))$ (resp. $H(\Cob(\bbone,\cQ^\ash,\cP^\ash))$) computes the generators of $\cQ$ as a free left (resp. right) $\cP$-module.
\end{thm*}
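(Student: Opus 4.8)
The plan is to reduce the statement to the classical homological freeness criterion recalled in the introduction, by recognizing the relative cobar construction as a small model for the relevant $\mathrm{Tor}$ group. I would carry out the right-module case; the left-module case is literally the same argument read over the opposite operads, so I would only indicate the symmetry.

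First I would recall the operadic Eilenberg criterion in the form: $f\colon\cP\to\cQ$ makes $\cQ$ a free right $\cP$-module if and only if the two-sided bar construction $\BB(\cQ,\bar\cP,\bbone)$, whose homology is $\mathrm{Tor}^{\cP}_{\bullet}(\cQ,\bbone)$, has homology concentrated in homological degree $0$, and in that case $H_{0}=\cQ\circ_{\cP}\bbone$ is precisely the $\Sym$-module of generators. So the whole statement follows once I produce a natural isomorphism of bigraded $\Sym$-modules
\[
H_{\bullet}\bigl(\Cob(\bbone,\cQ^\ash,\cP^\ash)\bigr)\;\cong\;\mathrm{Tor}^{\cP}_{\bullet}(\cQ,\bbone)
\]
preserving the grading by (homological degree, weight): then ``$\mathrm{Tor}$ concentrated in homological degree $0$'' becomes ``$H(\Cob(\bbone,\cQ^\ash,\cP^\ash))$ concentrated on the diagonal'', and on the diagonal the surviving group is $\cQ\circ_{\cP}\bbone$.

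To build that isomorphism I would pass through a module Koszul complex as a meeting point. Since $\cP$ is Koszul, the Koszul complex $\cP\circ_{\kappa_{\cP}}\cP^\ash$ is an acyclic resolution of $\bbone$ by free left $\cP$-modules; applying $\cQ\circ_{\cP}-$ shows that $\mathrm{Tor}^{\cP}(\cQ,\bbone)$ is the homology of the module Koszul complex $(\cQ\circ\cP^\ash,\,d_{\varphi})$, where $\varphi=f\circ\kappa_{\cP}\colon\cP^\ash\to\cQ$ is the pushforward of the canonical twisting morphism of $\cP$ along $f$; comparing bar and Koszul resolutions identifies this complex with $\BB(\cQ,\bar\cP,\bbone)$ degreewise. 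Applying the bar functor to $f$ and using that $\cP$ and $\cQ$ are Koszul, $\BB f$ is identified on homology with a morphism of conilpotent cooperads $\hat f\colon\cP^\ash\to\cQ^\ash$; by naturality of the canonical twisting morphisms it satisfies $\kappa_{\cQ}\circ\hat f=\varphi$, and $\hat f$ is exactly the map through which $\cP^\ash$ acquires its left $\cQ^\ash$-comodule structure in the statement. Then I would use Koszulness of $\cQ$ from the other side: the counit $\Cob(\cQ^\ash)\xrightarrow{\sim}\cQ$ is a quasi-isomorphism, and the two-sided cobar $\Cob(\bbone,\cQ^\ash,\cP^\ash)$ is, by unfolding its definition, the twisted composite product of $\Cob(\cQ^\ash)$ with the comodule $\cP^\ash$ along $\hat f$; the counit then induces a quasi-isomorphism $\Cob(\bbone,\cQ^\ash,\cP^\ash)\xrightarrow{\sim}(\cQ\circ\cP^\ash,d_{\varphi})$. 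Composing the two comparisons gives the displayed isomorphism; tracking the bigradings along the zig-zag --- on $\Cob(\bbone,\cQ^\ash,\cP^\ash)$ the homological degree is total weight minus the number of $\cQ^\ash$-vertices, so ``homological degree $0$'' is ``on the diagonal'' --- yields the bigraded statement, and in homological degree $0$ the cobar complex is the free operad on the generators of $\cQ$ modulo the differential coming out of degree $1$, whose homology one checks directly to be $\cQ\circ_{\cP}\bbone$. This proves the equivalence and pins down the generators.

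The hard part is the second comparison: that the counit $\Cob(\cQ^\ash)\xrightarrow{\sim}\cQ$ stays a quasi-isomorphism after forming the twisted composite product with $\cP^\ash$ on the right, and that the whole zig-zag respects the two gradings. Since $\cP^\ash$ is only bounded below in weight and need not be degreewise finite, there is no Künneth shortcut: one must filter every complex in sight by total weight and check that the resulting spectral sequences converge and agree from $E^{1}$ on. This is where Koszulness of \emph{both} operads is genuinely used --- it is exactly what makes $\kappa_{\cP}$ and $\kappa_{\cQ}$ Koszul twisting morphisms, equivalently what makes the associated graded of the filtration acyclic off the diagonal. A minor point, but one to be stated, is that all comodules and the two-sided cobar are taken in the conilpotent setting; this is automatic because $\cP^\ash$ and $\cQ^\ash$ are conilpotent cooperads, and it is what guarantees convergence of the cobar filtration and the validity of the comparison lemmas for twisting morphisms used throughout.
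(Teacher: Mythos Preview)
Your argument is correct, but it is organized quite differently from the paper's. You reduce everything to the classical Eilenberg criterion by exhibiting a natural isomorphism $H\bigl(\Cob(\bbone,\cQ^\ash,\cP^\ash)\bigr)\cong\mathrm{Tor}^{\cP}(\cQ,\bbone)$, passing through the intermediate Koszul complex $(\cQ\circ\cP^\ash,d_\varphi)$: Koszulness of $\cP$ links this complex to $\mathrm{Tor}$, and Koszulness of $\cQ$ links it to the relative cobar. The paper, by contrast, never invokes the classical criterion. It proves the two implications separately: for ``free $\Rightarrow$ diagonal'' it runs one spectral sequence on $\Cob(\cC',\cC,\bbone)$ to reach the same intermediate complex $(\cC'\circ\cQ,d_\alpha)$ and then plugs in the freeness hypothesis directly; for ``diagonal $\Rightarrow$ free'' it introduces the \emph{chained cobar construction} $\Cob(\bbone,\cC',\cC',\cC,\bbone)$, a bicosimplicial object whose total homology it computes two ways (one spectral sequence gives $\cQ$, the other gives $\cP\circ\cX$), bypassing $\mathrm{Tor}$ entirely. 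Your route is more conceptual and explains transparently why the roles of $\cP$ and $\cQ$ get swapped relative to the usual criterion; the paper's route is self-contained and yields the chained (co)bar as a reusable tool. The ``hard part'' you flag --- that the counit $\rCob(\cQ^\ash)\to\cQ$ remains a quasi-isomorphism after twisting on the right by $\cP^\ash$ --- is handled in both approaches by the same weight-filtration spectral sequence, so the analytic core of the two proofs coincides even though the logical packaging differs.
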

One should notice that the roles of $\cP$, and $\cQ$ are reverse in the criterion we give, hopefully explaining the additional Koszul hypothesis.

We then apply this result to several $\Lie$-operads. Mainly to the operads $\PL$, $\DLie$, and $\PostLie$, that are known to be free over $\Lie$. This allows us to answer several questions raised in a recent article \cite{OpPOS} of Delcroix-Oger, and Dupont on operadic posets.
More precisely, we get the following results:
\begin{thm*}\ref{thm:pl}
	The poset homology of $\Pi^\Perm_+$ is isomorphic to the species $\cT(\CLie)$ (up to suspension).	
\end{thm*}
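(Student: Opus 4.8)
The plan is to apply Theorem~\ref{crl:main} to the operad morphism $\Lie \hookrightarrow \PL$ (Koszul dual to the projection $\Perm \twoheadrightarrow \Com$) and then to transport the conclusion through the dictionary of \cite{OpPOS} relating the decorated partition poset $\Pi^\Perm_+$ to operadic (co)bar constructions. The first step is to recall from \cite{OpPOS} that the homology of $\Pi^\Perm_+$, collected as a species, is isomorphic up to operadic suspension (a homological shift together with a sign twist) to the homology of the cobar construction $\Cob(\Lie^\ash,\PL^\ash,\bbone)$; here the point is that the ``$+$'' (the augmentation) records exactly the projection $\Perm \twoheadrightarrow \Com$, so that $\Pi^\Perm_+$ is the $\Perm$-decorated partition poset taken relative to $\Com = \PL^!$, and the incidence complex of such a relative poset is the relative cobar complex of the Koszul dual cooperads $\PL^\ash$ and $\Lie^\ash$. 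Carefully matching the order complex of $\Pi^\Perm_+$ with this cobar complex --- in particular identifying the left $\Lie^\ash$-comodule structure on $\PL^\ash$ with the one induced by the augmentation, and checking the signs --- is the bookkeeping heart of this step, but it is an instance of the general machinery of \cite{OpPOS}.

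Next, I would feed in the two hypotheses that make Theorem~\ref{crl:main} applicable: $\Lie$ and $\PL$ are Koszul, and the inclusion $\Lie\hookrightarrow\PL$ exhibits $\PL$ as a \emph{free left} $\Lie$-module (one of the freeness results quoted in the introduction; if one prefers, it can be recovered from the acyclicity of $\BB(\bbone,\Lie,\PL)$). Theorem~\ref{crl:main} then gives, in one stroke, that $H(\Cob(\Lie^\ash,\PL^\ash,\bbone))$ is concentrated on the diagonal --- equivalently, combined with the first step, that $\Pi^\Perm_+$ is Cohen--Macaulay with homology concentrated in top degree --- and that this homology, as a species, is the space $\cM$ of generators of $\PL$ as a free left $\Lie$-module, i.e. the unique species with $\PL\cong\Lie\circ\cM$ as left $\Lie$-modules.

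It then remains to identify $\cM$ with $\cT(\CLie)$, which I expect to be the real content. The plan is to exhibit an explicit isomorphism of left $\Lie$-modules $\PL\cong\Lie\circ\cT(\CLie)$ using the rooted-tree model of the free pre-Lie operad (Chapoton--Livernet): a rooted tree should be decomposed along a canonical ``$\Lie$-backbone'' into a forest whose components lie in the free operad $\cT(\CLie)$ on the cyclic Lie species, the cyclic rather than merely rooted symmetry of the components arising because the cut edges carry no preferred orientation. At the level of generating series this is consistent: since $\dim\CLie((k))=\dim\Lie(k-1)=(k-2)!$, the species $\CLie$ has exponential generating series $\sum_{k\ge 2} x^{k}/(k(k-1))$, whence $f_{\cT(\CLie)}=1-e^{-T}$ with $T=xe^{T}$ the tree function, and therefore $f_{\Lie}\bigl(f_{\cT(\CLie)}\bigr)=-\log\bigl(e^{-T}\bigr)=T=f_{\PL}$; but a bare dimension count does not yield the isomorphism as species, so one genuinely wants the explicit map, or a universal property of $\cM$ that $\cT(\CLie)$ visibly enjoys. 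The two places where I anticipate difficulty are thus this final species-level identification $\cM\cong\cT(\CLie)$ and, in the first step, pinning down the precise comodule structure so that the cobar complex matches the poset complex on the nose.
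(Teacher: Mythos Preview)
Your proposal is correct and follows exactly the route the paper takes: identify the poset homology with $H(\Cob(\Com^\vee,\Perm^\vee,\bbone))$ via \cite{OpPOS}, then apply Corollary~\ref{crl:main} to $\Lie\hookrightarrow\PL$. The one place you diverge is in expectations: the identification $\PL\cong\Lie\circ\cT(\CLie)$ as a left $\Lie$-module, which you flag as ``the real content'' and propose to reprove via the rooted-tree model, is in the paper simply quoted from \cite{FPLr}, so neither of the two difficulties you anticipate actually needs to be overcome here.
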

\begin{thm*}\ref{thm:dlie}
	The posets $ ^\DCom\Pi^+$, and $\Pi^\DCom_+$ are Cohen-Macaulay.
\end{thm*}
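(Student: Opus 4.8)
The plan is to derive both statements from the homological freeness criterion, Theorem~\ref{crl:main}, combined with the identification of the (reduced, normalized) chain complex of a decorated partition poset with a two‑sided (co)bar construction of Koszul dual cooperads, as set up in \cite{OpPOS} (extending Fresse's treatment of the classical partition poset).

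\textbf{Operadic input.} First I would collect the facts needed about the operads themselves: $\Lie$ is Koszul (classical) and $\DLie$ is Koszul, and $\DLie$ is free both as a left \emph{and} as a right $\Lie$-module --- this is the freeness result recalled in the introduction. Passing to Koszul duals, $\Lie^\ash$ and $\DLie^\ash$ are the Koszul dual cooperads of $\Lie$ and $\DLie$, whose linear duals are $\Com$ and $\DCom$; these are precisely the cooperads governing the decorations of the blocks of $\Pi$ in $^\DCom\Pi^+$ and $\Pi^\DCom_+$. Then I apply Theorem~\ref{crl:main} with $\cP=\Lie$ and $\cQ=\DLie$: left $\Lie$-freeness of $\DLie$ gives that $H\big(\Cob(\Lie^\ash,\DLie^\ash,\bbone)\big)$ is concentrated in the diagonal, and right $\Lie$-freeness gives that $H\big(\Cob(\bbone,\DLie^\ash,\Lie^\ash)\big)$ is concentrated in the diagonal. (The same theorem identifies this diagonal homology with the spaces of free-module generators, so the poset homology is in fact accessible; only the vanishing outside the diagonal is needed for Cohen--Macaulayness.)

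\textbf{From diagonal concentration to Cohen--Macaulayness.} The central step is the translation. Every open interval of $^\DCom\Pi^+$ decomposes, via the operadic structure of the poset, as a product of intervals of the same family on smaller ground sets; under the dictionary of \cite{OpPOS} between species-valued complexes and poset chain complexes, the normalized simplicial chain complex of such an interval is exactly a summand of $\Cob(\Lie^\ash,\DLie^\ash,\bbone)$, up to the degree shift/suspension recorded by the length function of the poset. Hence the statement that $H\big(\Cob(\Lie^\ash,\DLie^\ash,\bbone)\big)$ is concentrated in the diagonal says precisely that the reduced homology of the order complex of every open interval of $^\DCom\Pi^+$ is concentrated in top degree; together with the (easy) fact that the poset is graded, this is Cohen--Macaulayness. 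The identical argument, with the two factors of the (co)bar construction exchanged so that $\Lie^\ash$ sits on the appropriate side, gives Cohen--Macaulayness of $\Pi^\DCom_+$ from the concentration of $H\big(\Cob(\bbone,\DLie^\ash,\Lie^\ash)\big)$.

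\textbf{Main obstacle.} I expect the work to concentrate in this last step: pinning down the isomorphism between the chain complex of $^\DCom\Pi^+$ (resp. $\Pi^\DCom_+$) and $\Cob(\Lie^\ash,\DLie^\ash,\bbone)$ (resp. $\Cob(\bbone,\DLie^\ash,\Lie^\ash)$), i.e.\ matching homological degree, arity and chain length, handling the role of the adjoined extrema (the $\bbone$ and the superscript $+$), and checking that the interval factorization of the poset is the one reflected by placing $\Lie^\ash$ on the correct side of the two-sided bar construction. Much of this is already available from \cite{OpPOS} and the classical $\Com$ case, so once the bookkeeping is fixed the conclusion is immediate. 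A secondary point, if it is not already in the literature, is the Koszulness of $\DLie$; this can be settled independently --- e.g.\ by exhibiting a quadratic Gröbner/PBW basis --- and does not interact with the poset argument.
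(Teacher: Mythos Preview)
Your approach is essentially the same as the paper's: cite left and right $\Lie$-freeness of $\DLie$ (from \cite{FreeOp}), apply Corollary~\ref{crl:main}, and use the identification of the poset homology with the two-sided cobar construction from \cite{OpPOS} (stated in the paper as Proposition~4.1) to conclude Cohen--Macaulayness. One bookkeeping point: you have the two posets matched to the wrong sides --- according to Proposition~4.1 it is $\Pi^\DCom_+$ that corresponds to $\Cob(\Lie^\ash,\DLie^\ash,\bbone)$ and $^\DCom\Pi^+$ to $\Cob(\bbone,\DLie^\ash,\Lie^\ash)$ --- but since $\DLie$ is free on both sides the conclusion is unaffected.
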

\begin{thm*}\ref{thm:postl}
	The poset $\Pi^\ComTriAss_+$ is Cohen-Macaulay. Moreover, its homology is isomorphic to the species $\Mag$ (up to suspension).
\end{thm*}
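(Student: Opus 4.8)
The plan is to apply Theorem~\ref{crl:main} to the operad morphism $\Lie\to\PostLie$. Both $\Lie$ and $\PostLie$ are Koszul, and, as recalled above, $\PostLie$ is free as a (one-sided) $\Lie$-module; moreover $\ComTriAss$ is the Koszul dual operad of $\PostLie$. By the dictionary between augmented decorated partition posets and cobar constructions (developed above, following \cite{OpPOS}), the reduced homology species of $\Pi^\ComTriAss_+$ is, up to suspension, the homology of $\Cob(\Lie^\ash,\PostLie^\ash,\bbone)$ --- or of its right-module counterpart $\Cob(\bbone,\PostLie^\ash,\Lie^\ash)$, according to the convention fixed for the decoration --- and, under this dictionary, ``homology concentrated on the diagonal'' corresponds exactly to ``reduced homology concentrated in top degree on every open interval'', i.e.\ to Cohen--Macaulayness of $\Pi^\ComTriAss_+$.

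With this translation in place, the first assertion is immediate: Theorem~\ref{crl:main} shows that the $\Lie$-freeness of $\PostLie$ forces $H(\Cob(\Lie^\ash,\PostLie^\ash,\bbone))$ onto the diagonal, hence $\Pi^\ComTriAss_+$ is Cohen--Macaulay. The same theorem identifies this diagonal homology with the space of generators of $\PostLie$ as a free $\Lie$-module, so the second assertion reduces to the purely operadic computation of that generating species.

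For the latter I would argue that the suboperad of $\PostLie$ generated by the magmatic operation $\triangleright$ alone is the free magmatic operad $\Mag$: none of the defining relations of $\PostLie$ (the Jacobi relation, the derivation relation, and the associator relation) involves $\triangleright$ only, so in a Gröbner basis for $\PostLie$ under an order favouring $[\,,]$-containing leading terms every $\triangleright$-monomial is in normal form, whence $\Mag\hookrightarrow\PostLie$. A PBW-type basis of $\PostLie$ over $\Lie$ then exhibits $\PostLie\cong\Lie\circ\Mag$ as left $\Lie$-modules (this can equivalently be checked on Frobenius characteristics, using $\dim\PostLie(n)=\dim(\Lie\circ\Mag)(n)$ together with the embedding just produced). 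Consequently the generating species of $\PostLie$ over $\Lie$ is $\Mag$, and combining this with the previous paragraph and the usual degree/sign shift --- treated exactly as in Theorem~\ref{thm:pl} --- gives $H(\Pi^\ComTriAss_+)\cong\Mag$ up to suspension.

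Since the homological content is entirely packaged by Theorem~\ref{crl:main}, the genuine difficulties are bookkeeping: (a) pinning down the precise form of the poset--cobar dictionary, in particular which side of $\Cob(-,\PostLie^\ash,-)$ matches $\Pi^\ComTriAss_+$ and the exact suspension; and (b) upgrading the dimension identity $\dim\PostLie(n)=\dim(\Lie\circ\Mag)(n)$ into an $\Sym_n$-equivariant isomorphism of generating species, i.e.\ establishing the $\Lie$-freeness of $\PostLie$ with full control over the symmetric-group action.
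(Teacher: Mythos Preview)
Your proposal is correct and follows essentially the same route as the paper: apply Corollary~\ref{crl:main} to $\Lie\to\PostLie$, use Koszul duality $\PostLie^\ash=\ComTriAss^\vee$ and the poset--cobar dictionary from \cite{OpPOS}, and identify the generating species as $\Mag$. The only difference is that the paper dispatches your point~(b) by citing \cite{PostLie} directly for the left $\Lie$-module isomorphism $\PostLie\cong\Lie\circ\Mag$ (noting it holds for either natural $\Lie$-embedding), whereas you sketch an independent Gr\"obner/PBW argument; and the paper resolves your point~(a) without ambiguity, since $\Pi^\ComTriAss_+$ corresponds to $\Cob(\Com^\vee,\cC,\bbone)$ and hence to \emph{left} $\Lie$-freeness.
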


\textbf{Organisation of the article.}
We begin by some recollections on the theory of linear species, and algebraic operads. 
First, we recall the first definitions of linear species, and algebraic operads. 
Then we give a quick introduction to operadic homological algebra, with the twisting morphisms, and the simplicial (co)bar construction.

In the following section, we introduce a slight generalisation of the (co)bar complexes, allowing us to show our main theorem.

Finally, we apply this theorem to the several $\Lie$-operads, in particular to $\PL$, $\DLie$, and $\PostLie$.
This allows us to show the Cohen-Macaulayness of several posets, and computes the homology of two of them.

\section{Recollection on species and operads}

\subsection{Linear species}

All vector spaces and chain complexes in this article are defined over a ground field $\Bbbk$ of zero characteristic. 
We freely use terminology of species of structures in the context of algebraic operads, we refer the reader to \cite{CombSpe,TLSpe,MonFSpe,MonFSpe} for introduction to the theory of species, and of algebraic operads. 

Let $\bbB$ the groupoid of finite sets with the bijections, we recall that the category of \emph{combinatorial species} is defined as $\Spe=\Fun(\bbB,\Set)$, and the category of \emph{linear species} as $\Vect\Spe=\Fun(\bbB,\Vect)$, with $\Set$ the category of sets, and $\Vect$ of vector spaces. 
We recall that the category $\Vect\Spe$ admits a monoidal structure the \emph{plethysm} admitting the following formula, let $\cR,\cS$ two linear species such that $\cR(\emptyset)=\cS(\emptyset)=0$, then
$$\cR\circ\cS(N)=\bigoplus_{P\vdash N}\cR(P)\otimes\bigotimes_{q\in P}\cS(q)$$
which is a categorification of the Faà di Bruno formula. Here, $P\vdash N$ denotes that $P$ is a partition of $N$.
The unit of the plethysm is the linear species $\bbone$ defined by $\bbone(\{\bullet\})=\Bbbk$, and $\bbone(N)=0$ if $|N|\neq 1$.
An \emph{algebraic operad} is a monoidal object in $\Vect\Spe$ for this monoidal structure.

For $\cR$, and $\cS$ two linear species such that $\cR(\emptyset)=\cS(\emptyset)=0$, the \emph{infinitesimal composition} denoted $\cR\circ'\cS$ is defined by:
$$\cR\circ'\cS(N)=\bigoplus_{P\vdash N}\cR(P)\otimes\bigoplus_{q\in P}\cS(q).$$
We should notice that a morphism $\bbone\to\cS$ induces a morphism $\cR\circ'\cS\to\cR\circ\cS$. Moreover, $\cS\circ'\cS$ is the degree $2$ part of $\cT(\cS)$, the free operad over $\cS$.

One can also define \emph{weight graded linear species}, and \emph{differential graded linear species} which behave as expected.
In particular, the homology of a differential graded linear species can be computed pointwise, $H(\cS)(N)=H(\cS(N))$.
In the sequel, we will mainly consider linear species that are both differential graded, and weight graded such that the differential respects the weight, such species will be called \emph{weighted dg species}. 
To compute the homology of such a species, we will use the tool of spectral sequences, and we refer to \cite{SpSeq} for an introduction to spectral sequences.
We point out that since the homology is computed pointwise, the weight grading being pointwise bounded is enough for the associated spectral sequence to converge.

\subsection{Operadic twisting morphisms}

Let us give a very quick introduction to operadic twisting morphisms, we refer to \cite[Section 6.4]{AlgOp} for more details.
We will use the terminology dg for differential graded.

Let $\cC$ a dg cooperad, $\cP$ a dg operad, 

\begin{dfn}
	The \emph{operadic convolution algebra} $\Hom(\cC,\cP)$ is the vector space of species morphisms from $\cC$ to $\cP$, endowed with the product $\star$ given by:
	$$f\star g: \cC\to\cC\circ'\cC\overset{f\circ' g}{\longrightarrow}\cP\circ'\cP\to\cP.$$
	This is a dg pre-Lie algebra, where the differential is the usual differential $\partial$ on morphisms of chain complexes.
\end{dfn}

\begin{dfn}
	A map $\alpha:\cC\to\cP$ is a \emph{twisting morphism} if
	$$\partial\alpha + \alpha\star\alpha=0.$$
\end{dfn}

Let $\alpha:\cC\to\cP$ a degree $-1$ morphism of species.
Let $d^l_\alpha$ the unique derivation which extend the map below as a derivation on $\cC\circ\cP$.
$$\cC\to\cC\circ'\cC\to\cC\circ'\cP\to\cC\circ\cP$$
Let $d_\alpha=d_{\cC\circ\cP}+d^l_\alpha$.

Similarly, we can define $d^r_\alpha$ on $\cP\circ\cC$ extending:
$$\cC\to\cC\circ'\cC\to\cP\circ'\cC\to\cP\circ\cC$$
and $d_\alpha=d_{\cP\circ\cC}+d^r_\alpha$.

\begin{prp}[\cite{AlgOp} Lemma 6.4.12]
	The following propositions are equivalent:
	\begin{itemize}
		\item $\alpha$ is a twisting morphism,
		\item $d_\alpha^2=0$ on $\cC\circ\cP$, and
		\item $d_\alpha^2=0$ on $\cP\circ\cC$.
	\end{itemize}
\end{prp}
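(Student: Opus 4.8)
The plan is to establish the two equivalences ``$\alpha$ is a twisting morphism $\iff d_\alpha^2=0$ on $\cC\circ\cP$'' and ``$\alpha$ is a twisting morphism $\iff d_\alpha^2=0$ on $\cP\circ\cC$'' separately; the equivalence of the two module conditions then follows formally, since both single out the same relation on $\alpha$. I start with the first one. The canonical differential $d_{\cC\circ\cP}$ induced by those of $\cC$ and $\cP$ is a derivation of the dg right $\cP$-module $\cC\circ\cP$ (this uses in particular that $\cP$ is a \emph{dg} operad, so that its internal differential is an operadic derivation), and $d^l_\alpha$ is, by construction, the unique right $\cP$-linear extension of the map $\cC\to\cC\circ'\cC\to\cC\circ'\cP\hookrightarrow\cC\circ\cP$; hence $d_\alpha=d_{\cC\circ\cP}+d^l_\alpha$ is again a degree $-1$ derivation of the dg right $\cP$-module $\cC\circ\cP$. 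Squaring such a derivation produces a morphism of right $\cP$-modules of degree $-2$ (the cross terms involving the internal differential of $\cP$ cancel), and a morphism out of the free right $\cP$-module $\cC\circ\cP$ is determined by its restriction to the generating species $\cC$, included via the unit $\bbone\to\cP$. Thus $d_\alpha^2=0$ on $\cC\circ\cP$ if and only if $d_\alpha^2|_\cC=0$.

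The core of the proof is then the computation of $d_\alpha^2|_\cC$. Expanding $d_\alpha^2=d_{\cC\circ\cP}^2+[d_{\cC\circ\cP},d^l_\alpha]+(d^l_\alpha)^2$, the first term vanishes because the (co)differentials of $\cC$ and $\cP$ square to zero. For the middle term, restricting to $\cC$ and using that $d_\cC$ is a coderivation of $\cC$, the two summands in which $d_\cC$ differentiates the outer cofactor of the infinitesimal decomposition $\cC\to\cC\circ'\cC$ cancel, and one is left with the map $\cC\to\cC\circ'\cC\to\cC\circ'\cP\hookrightarrow\cC\circ\cP$ whose middle arrow is $\id_\cC\circ'\partial\alpha$. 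For the last term, iterating $d^l_\alpha$ on $\cC$ yields both ``parallel'' contributions, where the two extracted cofactors sit in independent inputs, and ``nested'' ones; the parallel contributions cancel in pairs by the Koszul sign rule, since $\alpha$ and $d^l_\alpha$ are of odd degree, while coassociativity of the decomposition of $\cC$, together with the definition of $\star$, identifies the nested ones with the map $\cC\to\cC\circ'\cC\to\cC\circ'\cP\hookrightarrow\cC\circ\cP$ whose middle arrow is $\id_\cC\circ'(\alpha\star\alpha)$. Adding these, $d_\alpha^2|_\cC$ equals, up to an overall sign, the map $\cC\to\cC\circ'\cC\to\cC\circ'\cP\hookrightarrow\cC\circ\cP$ whose middle arrow is $\id_\cC\circ'(\partial\alpha+\alpha\star\alpha)$. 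Since the counit axiom of $\cC$ guarantees that the image of $c$ under $\cC\to\cC\circ'\cC$ contains the summand with trivial outer cofactor, projecting the result onto the corresponding component $\bbone\circ'\cP\cong\cP$ recovers $(\partial\alpha+\alpha\star\alpha)(c)$. Hence $d_\alpha^2|_\cC=0$ if and only if $\partial\alpha+\alpha\star\alpha=0$, that is, if and only if $\alpha$ is a twisting morphism.

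The equivalence with ``$d_\alpha^2=0$ on $\cP\circ\cC$'' is proved in the same way, \emph{mutatis mutandis}: this time $d_\alpha=d_{\cP\circ\cC}+d^r_\alpha$ is a degree $-1$ derivation of the dg \emph{left} $\cP$-module $\cP\circ\cC$, whose generating species $\cC$ is included via the unit $\bbone\to\cP$, so $d_\alpha^2=0$ iff $d_\alpha^2|_\cC=0$; the analogous computation --- now with $\partial\alpha$ and $\alpha\star\alpha$ acting on the outer cofactor of $\cC\to\cC\circ'\cC$ --- gives that $d_\alpha^2|_\cC$ is, up to sign, the map $\cC\to\cC\circ'\cC\to\cP\circ'\cC\hookrightarrow\cP\circ\cC$ whose middle arrow is $(\partial\alpha+\alpha\star\alpha)\circ'\id_\cC$, which by the counit axiom vanishes iff $\partial\alpha+\alpha\star\alpha=0$. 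Note that $\star$ is defined purely in terms of the decomposition of $\cC$ and the infinitesimal composition of $\cP$, so it is literally the same element of the convolution algebra in both computations. Combining the two equivalences gives the proposition. I expect the sign bookkeeping in the restrictions $[d_{\cC\circ\cP},d^l_\alpha]|_\cC$ and $(d^l_\alpha)^2|_\cC$ --- concretely, checking that the ``outer $d_\cC$'' terms and the ``parallel'' terms cancel as claimed --- to be the only genuinely delicate point; everything else is formal, and these computations can also be read off from \cite[Section 6.3 and Lemma 6.4.12]{AlgOp}.
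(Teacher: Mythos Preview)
The paper does not actually prove this proposition: it is stated in the recollection section with an explicit citation to \cite[Lemma 6.4.12]{AlgOp} and no proof is given. Your argument is the standard one from that reference --- reducing $d_\alpha^2=0$ to its restriction on the generating species $\cC$ via freeness of $\cC\circ\cP$ as a right $\cP$-module, and then computing that restriction in terms of $\partial\alpha+\alpha\star\alpha$ --- and it is correct, so there is nothing to compare.
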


\subsection{(Co)Bar complexes}\label{sec:Kp}

We refer to \cite{KDPartHom,CompBar,AlgOp} for an introduction to operadic (co)bar constructions and comparison of the different constructions. In this article, we will use the simplicial operadic (co)bar construction.

Let $\cP$ an operad, and $\cL$, $\cR$ respectively a left and a right $\cP$-module.
We recall that it provides us the following maps: 
\begin{itemize}
	\item the operadic product $\gamma:\cP\circ\cP\to\cP$,
	\item the unit $\unit:\bbone\to\cP$,
	\item the left action $\rho_\cL:\cL\circ\cP\to\cP$, and
	\item the left action $\rho_\cR:\cP\circ\cR\to\cP$.
\end{itemize}
Thus, it allows us to construct the following simplicial species denoted $\BB(\cR,\cP,\cL)$ such that:
$$\BB_n(\cR,\cP,\cL)=\cR\circ\cP^{\circ n}\circ\cL$$
and the face and degeneracy maps a given by:
\begin{itemize}
	\item $d_{n,0}=(\rho_\cR,\id,\dots,\id)$,
	\item $d_{n,n}=(\id,\dots,\id,\rho_\cL)$,
	\item $d_{n,i}=(\id,\dots,\id,\gamma,\id,\dots,\id)$ with $\gamma$ in position $i+1$, and for $i\neq 0$ or $n$,
	\item $s_{n,j}=(\id,\dots,\id,\unit,\id,\dots,\id)$ with $\unit$ in position $j+1$.
\end{itemize}

We identify $\BB(\cR,\cP,\cL)$ with its associated differential graded linear species.
We denote by $\rBar(\cP)=\BB(\bbone,\cP,\bbone)$ the reduced bar construction of $\cP$. Furthermore, we point out that this is canonically a cooperad, moreover, we have a canonical twisting morphism $\kappa:\cP\to\rBar(\cP)$. 

For $\cC$ a cooperad, and $\cL$, $\cR$ a left and a right $\cC$-comodule, we get the cobar construction $\Cob(\cR,\cC,\cL)$. 
We denote by $\rCob(\cC)=\Cob(\bbone,\cC,\bbone)$ the reduced cobar construction of $\cC$, which is canonically an operad, and admits a canonical twisting morphism $\kappa:\cC\to\rCob(\cC))$.

Let us recall the following classical result:

\begin{thm}[\cite{AlgOp} Lemma 6.5.14]
	Let $\cP$ an operad. Then 
	\begin{itemize}
		\item $\BB(\cP,\cP,\bbone)\simeq(\cP\circ\rBar(\cP),d_\kappa)\simeq\bbone$, and
		\item $\BB(\bbone,\cP,\cP)\simeq(\rBar(\cP)\circ\cP,d_\kappa)\simeq\bbone$.
	\end{itemize}
	The same results hold for a cooperad.
\end{thm}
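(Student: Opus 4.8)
The plan is to establish the first bullet in two steps and then obtain the remaining assertions by symmetry and by dualisation. Throughout, write $\bar{\cP}$ for the augmentation ideal of $\cP$ (this statement forces $\cP$ to be augmented, e.g.\ reduced with $\cP(1)=\Bbbk$), so that $\rBar(\cP)$ has underlying graded species $\bigoplus_{n\ge 0}\bar{\cP}^{\circ n}$ with $\bar{\cP}^{\circ n}$ placed in homological degree $n$. The first step is acyclicity. I would note that the simplicial species $\BB(\cP,\cP,\bbone)$ carries an \emph{extra degeneracy} $s_{-1}\colon\BB_n\to\BB_{n+1}$: on $\BB_n(\cP,\cP,\bbone)=\cP\circ\cP^{\circ n}$ it is the map $\cP\circ\cP^{\circ n}\cong\bbone\circ\cP\circ\cP^{\circ n}\xrightarrow{\,\unit\circ\id\,}\cP\circ\cP\circ\cP^{\circ n}$ that prepends a copy of the operadic unit as a new leftmost (module) factor. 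Using the unit axiom of $\cP$ one verifies $d_0s_{-1}=\id$, $d_is_{-1}=s_{-1}d_{i-1}$ for $i\ge 1$, and $s_js_{-1}=s_{-1}s_{j-1}$, so that $s_{-1}$ is a contracting homotopy of the chain complex of $\BB(\cP,\cP,\bbone)$ augmented over $\bbone$ via the counit $\BB_0=\cP\to\bbone$. Hence $\BB(\cP,\cP,\bbone)\simeq\bbone$: the homology vanishes in positive degrees, and $H_0$ is the relative composite $\cP\circ_\cP\bbone$, which is $\bbone$ because $\cP$ is augmented.

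The second step identifies the associated chain complex with $(\cP\circ\rBar(\cP),d_\kappa)$. I would pass to the normalised (Dold--Kan) complex, quasi-isomorphic to the unnormalised one; quotienting by the degeneracies replaces each of the $n$ inner copies of $\cP$ by $\bar{\cP}$, so in degree $n$ one gets $\cP\circ\bar{\cP}^{\circ n}=(\cP\circ\rBar(\cP))_n$. Under this identification the alternating sum $\sum_i(-1)^i d_i$ of faces decomposes into three pieces: the interior faces $d_1,\dots,d_{n-1}$, given by operadic composition $\gamma$ in inner positions, assemble into the bar differential of the dg cooperad $\rBar(\cP)$, i.e.\ the $d_{\cP\circ\rBar(\cP)}$ summand; the last face $d_n$, built from the left action $\rho_{\bbone}\colon\cP\circ\bbone\to\bbone$, is the counit and so vanishes on $\bar{\cP}$ modulo degeneracies; and the zeroth face $d_0=(\gamma,\id,\dots,\id)$, which grafts the first $\bar{\cP}$-factor onto the module copy of $\cP$, is precisely the extra term by which $d_\kappa$ differs from $d_{\cP\circ\rBar(\cP)}$, namely the contribution of the canonical twisting morphism $\kappa$. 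Therefore $\sum_i(-1)^i d_i=d_\kappa$ and $\BB(\cP,\cP,\bbone)\simeq(\cP\circ\rBar(\cP),d_\kappa)$, which together with the first step gives $(\cP\circ\rBar(\cP),d_\kappa)\simeq\bbone$.

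The second bullet follows from the mirror-image argument, the extra degeneracy now appending a unit after the module factor $\cL=\cP$ and the normalisation turning $\BB(\bbone,\cP,\cP)$ into $(\rBar(\cP)\circ\cP,d_\kappa)$. For the cooperadic statements, one replaces $\cP$ by a coaugmented cooperad $\cC$, the simplicial bar construction by the simplicial cobar construction $\Cob$, and every structure map by its dual; the extra degeneracy becomes the corresponding extra codegeneracy built from the counit of $\cC$, the normalisation argument dualises verbatim, and one obtains $\Cob(\cC,\cC,\bbone)\simeq(\cC\circ\rCob(\cC),d_\kappa)\simeq\bbone$ and its mirror.

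The step I expect to require the most care is the normalisation in the second paragraph: one must reconcile the simplicial sign conventions for $\sum_i(-1)^i d_i$ with the signs built into $d_\kappa$ and into the bar differential of $\rBar(\cP)$, and check that passing from $\cP$ to $\bar{\cP}$ is exactly what both upgrades $\rBar(\cP)$ to a dg cooperad with the correct differential and makes the relevant weight grading pointwise bounded, so that the Dold--Kan comparison (and any auxiliary spectral sequence) converges. Once the extra degeneracy has been written down, the contractibility in the first step is entirely formal.
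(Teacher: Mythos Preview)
The paper does not give its own proof of this statement; it is quoted from \cite{AlgOp} as a background recollection in Section~\ref{sec:Kp}. Your argument---an extra degeneracy $s_{-1}$ built from the unit $\unit$ to exhibit a contracting homotopy, followed by passage to the normalised complex to identify the simplicial bar construction with $(\cP\circ\rBar(\cP),d_\kappa)$---is correct and is the standard route to this result (and indeed is essentially how the cited reference proceeds).
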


\begin{thm}[\cite{AlgOp} Theorem 6.6.5]
	Let $\cP$ an operad, then $\cP\simeq\rCob(\rBar(\cP))$.
	The same result holds for a cooperad.
\end{thm}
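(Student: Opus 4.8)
The plan is to recognise the statement as the bar--cobar resolution and to deduce it from the acyclicity of the two-sided bar construction recalled just above. First I would unwind it through the bar--cobar adjunction: the canonical twisting morphism $\kappa\colon\rBar(\cP)\to\cP$ attached to the bar construction corresponds, under the natural bijections $\Hom_{\mathrm{Op}}(\rCob(\cC),\cP)\cong\mathrm{Tw}(\cC,\cP)\cong\Hom_{\mathrm{coOp}}(\cC,\rBar(\cP))$ (dg operad morphisms, twisting morphisms, dg cooperad morphisms) evaluated at $\cC=\rBar(\cP)$, to a canonical morphism of dg operads $\varepsilon\colon\rCob(\rBar(\cP))\to\cP$; it is the unique morphism of dg operads with $\varepsilon\circ\kappa'=\kappa$, where $\kappa'\colon\rBar(\cP)\to\rCob(\rBar(\cP))$ is the twisting morphism attached to the cobar construction. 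It then suffices to show that $\varepsilon$ is a quasi-isomorphism, and since every object in play is a weighted dg species for the weight grading inherited from $\cP$ --- which is pointwise bounded --- this may be checked on the associated spectral sequences.

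Next I would compare two acyclic twisted composite products. The previous theorem gives $\BB(\cP,\cP,\bbone)\simeq(\cP\circ\rBar(\cP),d_\kappa)\simeq\bbone$, and, in its cooperad form applied to the conilpotent cooperad $\cC=\rBar(\cP)$, it also gives $(\rCob(\rBar(\cP))\circ\rBar(\cP),d_{\kappa'})\simeq\bbone$. The relation $\varepsilon\circ\kappa'=\kappa$ makes $\varepsilon\circ\id_{\rBar(\cP)}$ a morphism of weighted dg species between these two complexes, hence a quasi-isomorphism. To pass from this to $\varepsilon$ itself I would put on both complexes a decreasing filtration --- by weight, suitably refined --- chosen so that the twisting parts $d_\kappa$ and $d_{\kappa'}$ strictly lower the filtration while the internal differentials of $\cP$, $\rBar(\cP)$ and $\rCob(\rBar(\cP))$ preserve it; on the associated graded the two complexes become plethysms of the shape $(-)\circ\rBar(\cP)$, so that, homology commuting with the plethysm over a field, their homologies are $H(\cP)\circ H(\rBar(\cP))$ and $H(\rCob(\rBar(\cP)))\circ H(\rBar(\cP))$. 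Reading this off against the coarser step of the filtration --- where the acyclicity $H(\BB(\cP,\cP,\bbone))=\bbone$ forces the $\rBar(\cP)$-tensor factor to collapse onto its unit part --- one cancels the common factor and concludes that $H(\varepsilon)$ is an isomorphism. The statement for cooperads follows by the evident dualisation.

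The step I expect to be the main obstacle is precisely this last descent from $\varepsilon\circ\id_{\rBar(\cP)}$ to $\varepsilon$: one has to choose the filtration carefully enough that on each page the twisted parts of the differential really disappear and the complexes genuinely factor as $(-)\circ\rBar(\cP)$ with purely internal differential, so that the plethystic Künneth formula applies, and then to leverage the collapse of $H(\BB(\cP,\cP,\bbone))$ onto $\bbone$ so as to strip off the common $\rBar(\cP)$-factor rather than being left with the weaker conclusion about $\varepsilon\circ\id_{\rBar(\cP)}$. Convergence of the spectral sequences is not a difficulty, since the weight grading is pointwise bounded and $\rBar(\cP)$ is conilpotent, so all the twisted composites are exhaustively filtered. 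Alternatively, this descent can be isolated once and for all as the comparison lemma --- the fundamental theorem of operadic twisting morphisms --- and simply invoked; the mathematical content is the same.
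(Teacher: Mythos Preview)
The paper does not prove this theorem: it appears in the recollection section and is merely cited from \cite{AlgOp} (Theorem~6.6.5) without argument, so there is no ``paper's own proof'' to compare against. Your outline is essentially the standard proof given in that reference --- the bar--cobar adjunction produces the counit $\varepsilon$, and one shows it is a quasi-isomorphism via the acyclicity of the Koszul complexes $(\cP\circ\rBar(\cP),d_\kappa)$ and $(\rCob(\rBar(\cP))\circ\rBar(\cP),d_{\kappa'})$.

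One remark on your descent step: the phrasing ``cancel the common factor $\rBar(\cP)$'' is dangerous as stated, since $A\circ\cC\simeq B\circ\cC$ does not in general imply $A\simeq B$. What actually makes the argument go through is that you have a \emph{map} $\varepsilon\circ\id$ between the two acyclic twisted composites, and the filtration is set up so that on the $E^1$-page the induced map is $H(\varepsilon)\circ\id_{H(\rBar(\cP))}$; since plethysm with a species whose arity-one part contains $\bbone$ reflects isomorphisms, this forces $H(\varepsilon)$ to be an isomorphism. Your final sentence already identifies the clean packaging of this step as the comparison lemma (the fundamental theorem of operadic twisting morphisms, \cite{AlgOp}~6.6.2--6.6.3), and invoking that directly is indeed the tidiest route.
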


In the application section of this article, some Koszul operads are studied. 
We recall that Koszul operads are quadratic, in particular, they are graded.
However, let us skip the definition of a Koszul operad since we will only use the following property of Koszul operads. 

\begin{thm}[\cite{AlgOp} Proposition 7.3.2]
	Let $\cP$ a Koszul operad concentrated in homological degree $0$, then $\rBar(\cP)$ is diagonal.
	Dually, if $\cC$ is a Koszul cooperad concentrated in homological degree $0$, then $\rCob(\cC)$ is diagonal.
\end{thm}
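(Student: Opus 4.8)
The plan is to read both statements off the construction of the Koszul dual cooperad together with the standard reformulation of Koszulness via twisting morphisms. Recall that $\rBar(\cP)$ is a weighted dg species: it carries the weight grading inherited from the quadratic operad $\cP$, and a homological grading given by the number of $\bar\cP$-factors, which I will call the \emph{syzygy degree}. As $\bar\cP$ is connected and concentrated in weights $\geq 1$, an element of syzygy degree $s$ has weight at least $s$; thus in weight $w$ the species $\rBar(\cP)$ is supported in syzygy degrees $s\leq w$, and being \emph{diagonal} means its homology is concentrated where $s=w$. The internal differential of $\rBar(\cP)$ preserves the weight and lowers the syzygy degree by one.

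Next I would observe that the Koszul dual cooperad $\cP^\ash$ embeds in $\rBar(\cP)$ as a sub-dg-cooperad and, by construction, in weight $w$ it lies entirely in syzygy degree $w$ --- it is built from the generators of $\cP$, placed at the $w$ internal vertices of weight-$w$ tree monomials. Since the differential of $\rBar(\cP)$ strictly lowers the syzygy degree while $\cP^\ash$ occupies a single syzygy degree in each weight, the restriction of that differential to $\cP^\ash$ vanishes; so $\cP^\ash$ is a weighted dg species carrying the zero differential, concentrated on the diagonal, included in $\rBar(\cP)$ by the canonical map attached to the twisting morphism $\kappa\colon\cP^\ash\to\cP$.

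Then I would invoke the fundamental theorem of operadic twisting morphisms: for a twisting morphism $\alpha\colon\cC\to\cP$ between connected (co)operads, the Koszul complex $(\cC\circ\cP,d_\alpha)$ is acyclic if and only if the induced morphism $\cC\to\rBar(\cP)$ is a quasi-isomorphism. Applying this to $\alpha=\kappa\colon\cP^\ash\to\cP$ --- for which acyclicity of the Koszul complex is, by definition, the Koszulness of $\cP$ --- yields $H(\rBar(\cP))\cong\cP^\ash$, which by the previous paragraph is concentrated on the diagonal; hence $\rBar(\cP)$ is diagonal. The statement for a Koszul cooperad $\cC$ is the formal dual: the same argument with operads and cooperads interchanged (replacing $\rBar$ by $\rCob$, and $\cP^\ash$ by the Koszul dual operad $\cC^\ash$, which again is concentrated on the diagonal) gives $H(\rCob(\cC))\cong\cC^\ash$, so $\rCob(\cC)$ is diagonal.

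The genuine content is the fundamental theorem invoked above, and this is the only step I expect to cost real work; the rest is bookkeeping with the weight and syzygy gradings. I would prove it in the usual way, through a two-sided twisted composite: from $(\cP\circ\rBar(\cP),d_\kappa)\simeq\bbone$ (recalled in the excerpt) and, under the Koszul hypothesis, $(\cP^\ash\circ\cP,d_\kappa)\simeq\bbone$, one collapses $\cP^\ash\circ\cP\circ\rBar(\cP)$ in its two possible ways to obtain $\cP^\ash\simeq\rBar(\cP)$. The one delicate point is the convergence of the associated weight spectral sequences, guaranteed here because the weight grading is bounded pointwise --- exactly the finiteness observed in the recollections above.
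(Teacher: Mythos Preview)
The paper does not prove this statement: it is quoted from \cite{AlgOp} (Proposition~7.3.2), and the very next sentence in the paper invites the reader to take it as the \emph{definition} of Koszulness for an operad concentrated in homological degree $0$. So there is no in-paper proof to compare against.

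Your argument is the standard one and is correct: $\cP^\ash$ sits on the diagonal of $\rBar(\cP)$ with trivial induced differential, and the fundamental theorem of operadic twisting morphisms upgrades the acyclicity of $(\cP^\ash\circ\cP,d_\kappa)$ (the Koszul hypothesis) to a quasi-isomorphism $\cP^\ash\hookrightarrow\rBar(\cP)$, forcing the homology of $\rBar(\cP)$ onto the diagonal. This is exactly the content of the cited proposition in \cite{AlgOp}, so in that sense your proposal matches the source the paper defers to.
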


Through this article, the reader unfamiliar with operadic Koszul theory can consider the above result as the definition of being Koszul for an operad (resp. cooperad) concentrated in homological degree $0$.

\section{Main result}

\subsection{The chained (co)bar constructions}

Let us introduce a slight generalisation of the (co)bar construction that we will use in the proof of the theorems.

Let:
\begin{itemize}
	\item $\cP$, and $\cQ$ two operads,
	\item $\cL$ a left $\cQ$-module,
	\item $\cR$ a right $\cP$-module, and
	\item $\cM$ a $\cP$-$\cQ$-bimodule, meaning that $\cM$ is a left $\cP$-module and a right $\cQ$-module such that the following diagram commutes:
		\[\begin{tikzcd}
			{\cP\circ\cM\circ\cQ} & {\cP\circ\cM} \\
			{\cM\circ\cQ} & {\cM}
			\arrow[from=1-1, to=1-2]
			\arrow[from=1-1, to=2-1]
			\arrow[from=1-2, to=2-2]
			\arrow[from=2-1, to=2-2]
		\end{tikzcd}\]
\end{itemize}

Let us defined the \emph{chained bar construction} $\BB(\cR,\cP,\cM,\cQ,\cL)$ as the bisimplicial species given by:
$$\BB_{i,j}(\cR,\cP,\cM,\cQ,\cL)=\cR\circ\cP^{\circ i}\circ\cM\circ\cQ^{\circ j}\circ\cL$$
The horizontal face maps are given by the operadic product of $\cP$, and the $\cP$-module maps of $\cR$ and $\cM$. 
The horizontal degeneracy maps are given by the unit of $\cP$.
The vertical face maps are given by the operadic product of $\cQ$, and the $\cQ$-module maps of $\cM$ and $\cL$.
Finally, the vertical degeneracy maps are given by the unit of $\cQ$.
One can easily check that this is indeed a bisimplicial species.

Same as with the bar construction, we identify $\BB(\cR,\cP,\cM,\cQ,\cL)$ with its associated double dg species. 
For:
\begin{itemize}
	\item $\cC$, and $\cC'$ two operads,
	\item $\cL$ a left $\cC'$-comodule,
	\item $\cR$ a right $\cC$-comodule, and
	\item $\cM$ a $\cC$-$\cC'$-bicomodule, meaning that $\cM$ is a left $\cC$-comodule and a right $\cC'$-module such that the comodule maps commutes,
\end{itemize}
we have the dual construction, the \emph{chained cobar construction}, $\Cob(\cR,\cC,\cM,\cC',\cL)$ which is a bicosimplicial species. We identify it with its associated double dg species.

\begin{prp}~
	\begin{itemize}
		\item The total chain complex $\BB(\cR,\cP,\cM,\cQ,\cL)$ is isomorphic to $\BB(\cR,\cP,\bbone)\circ\BB(\cM,\cQ,\cL)$ as graded linear species. 
			Moreover, if the $\cP$-module map $\cP\circ\cM\to\cM$ is trivial, meaning that it is the identity on $\bbone$, and zero on $\bar{\cP}$ under the decomposition $\cP=\bbone\oplus\bar{\cP}$, then we get an isomorphism of dg species.
		\item The total chain complex $\BB(\cR,\cP,\cM,\cQ,\cL)$ is isomorphic to $\BB(\cR,\cP,\cM)\circ\BB(\bbone,\cQ,\cL)$ as graded linear species. 
			Moreover, if the $\cQ$-module map $\cM\circ\cQ\to\cM$ is trivial, then we get an isomorphism of dg species.
	\end{itemize}
	Dually, the same results hold for the chained cobar construction.
\end{prp}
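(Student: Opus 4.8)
The plan is to identify the two (bi)simplicial species bidegree by bidegree --- which yields the isomorphism of graded linear species essentially for free --- and then to locate the single face map in which they disagree, which accounts for the differential and hence for the extra hypothesis in the ``moreover'' parts.

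First I would record the bidegreewise identification. In bidegree $(i,j)$ the chained bar construction is $\cR\circ\bar{\cP}^{\circ i}\circ\cM\circ\bar{\cQ}^{\circ j}\circ\cL$, and by associativity of the plethysm this equals both $(\cR\circ\bar{\cP}^{\circ i})\circ(\cM\circ\bar{\cQ}^{\circ j}\circ\cL)=\BB_i(\cR,\cP,\bbone)\circ\BB_j(\cM,\cQ,\cL)$ and $(\cR\circ\bar{\cP}^{\circ i}\circ\cM)\circ(\bar{\cQ}^{\circ j}\circ\cL)=\BB_i(\cR,\cP,\cM)\circ\BB_j(\bbone,\cQ,\cL)$, the two decompositions amounting to cutting the chain just after, respectively just before, the factor $\cM$. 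Summing over $i+j=n$ gives both isomorphisms of graded linear species, and the dual decompositions of cooperads and comodules give the chained cobar versions. Everything then reduces to comparing the total differentials.

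Next I would compare faces, say under the first identification. The vertical faces (the $\cQ$-direction: $\gamma$ on $\cQ$, the right action $\cM\circ\cQ\to\cM$ at the left end of the $\cQ$-block, the left action $\cQ\circ\cL\to\cL$ at the right end) are exactly $\id_{\BB(\cR,\cP,\bbone)}$ composed with the faces of $\BB(\cM,\cQ,\cL)$, so the vertical part of the total differential agrees on the nose. Among the horizontal faces, the ones coming from $\gamma$ on $\cP$ and from the right action $\cR\circ\cP\to\cR$ agree with those of $\BB(\cR,\cP,\bbone)$; the sole exception is the last horizontal face, which in $\BB(\cR,\cP,\cM,\cQ,\cL)$ contracts the last copy of $\cP$ into $\cM$ along $\rho_\cM\colon\cP\circ\cM\to\cM$, whereas in $\BB(\cR,\cP,\bbone)$ the analogous face contracts it into $\bbone$ along the augmentation $\cP\to\bbone$. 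Hence the differential of $\BB(\cR,\cP,\cM,\cQ,\cL)$ is the one transported from $\BB(\cR,\cP,\bbone)\circ\BB(\cM,\cQ,\cL)$ plus a single correction term induced by $\rho_\cM$ (with sign $(-1)^i$ in bidegree $(i,j)$); the two coincide precisely when $\rho_\cM$ factors through the augmentation, i.e.\ is the identity on $\bbone$ and zero on $\bar{\cP}$, and then the isomorphism upgrades to one of dg species. The second bullet is the mirror image: with the decomposition $\BB_i(\cR,\cP,\cM)\circ\BB_j(\bbone,\cQ,\cL)$ the horizontal faces now match outright and the lone discrepancy is the first vertical face, which contracts the first copy of $\cQ$ into $\cM$ along $\cM\circ\cQ\to\cM$ rather than along the augmentation $\cQ\to\bbone$, so triviality of $\cM\circ\cQ\to\cM$ removes it. Dualizing (replace $\gamma$, the module actions and the units by the cocomposition, the comodule structure maps and the coaugmentations; read ``trivial'' as: the comodule map $\cM\to\cC\circ\cM$, resp.\ $\cM\to\cM\circ\cC'$, is the coaugmentation tensored with $\id_\cM$, i.e.\ is zero into the $\bar{\cC}$-summand) yields the chained cobar statement with the identical argument.

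The point needing care is to make ``the only discrepancy is a single face'' literal: one must fix the Koszul sign convention turning the bicomplex into a total complex, check that the transported differential really is $d_{\BB(\cR,\cP,\bbone)}\circ\id\pm\id\circ d_{\BB(\cM,\cQ,\cL)}$ with exactly those signs, and verify that the associativity-of-plethysm identification is compatible with all the simplicial face and degeneracy maps, not merely with the underlying graded species. Once this bookkeeping is in place, the comparison of the two differentials is the one-line observation above.
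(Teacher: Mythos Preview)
Your proposal is correct and follows the same approach as the paper: identify the degree-$n$ pieces via associativity of the plethysm, then compare the face maps to isolate the single one (the action of $\cP$ on $\cM$, resp.\ of $\cQ$ on $\cM$) that can differ, which is precisely what the paper compresses into ``a quick inspection shows that this is the case if and only if the according module map is trivial.'' One small discrepancy: the paper works with the unnormalized simplicial bar (factors $\cP^{\circ i}$ rather than $\bar{\cP}^{\circ i}$), but this does not affect the argument.
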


\begin{proof}
	By definition, we have 
	$$\BB_n(\cR,\cP,\cM,\cQ,\cL)=\bigoplus_{i+j=n}\cR\circ\cP^{\circ i}\circ\cM\cQ^{\circ j}\circ\cL$$
	which is exactly the degree $n$ component of both $\BB(\cR,\cP,\bbone)\circ\BB(\cM,\cQ,\cL)$, and $\BB(\cR,\cP,\cM)\circ\BB(\bbone,\cQ,\cL)$.

	To get a morphism of dg species, one should check that the differentials are the same. A quick inspection show that this is the case if and only if the according module map is trivial.
\end{proof}

One can naturally extend this construction to any sequence of operads (resp. cooperads) $(\cP_1,\dots,\cP_n)$ chained together via a sequence of operadic bimodules (resp. bicomodules) $(\cM_{1,2},\dots,\cM_{i,i+1},\dots,\cM_{n-1,n})$ such that $\cM_{i,i+1}$ is a $\cP_{i}$-$\cP_{i+1}$-module (resp. comodule). 

\subsection{Freeness theorem}

Through this section, let $\cC$, and $\cC'$ two weighted cooperads endowed with a morphism of weighted cooperads $\varphi:\cC'\to\cC$.
We set $\cQ=H(\Cob(\cC))$ and $\cP=H(\Cob(\cC'))$, those are homologically weighted dg operads endowed with a morphism of weighted dg operads $\psi:\cP\to\cQ$.

\begin{prp}
	Let $\pi:\cC\to\cQ$, and $\kappa:\cC'\to\cP$ the canonical twisting morphisms. Then the following diagram commutes:
	\[\begin{tikzcd}
		{\cC'} & {\cC}\\
		{\cP} & {\cQ}
		\arrow["\varphi", from=1-1, to=1-2]
		\arrow["\kappa", from=1-1, to=2-1]
		\arrow["\pi", from=1-2, to=2-2]
		\arrow["\psi", from=2-1, to=2-2]
	\end{tikzcd}\]
	Moreover, the diagonal map $\alpha$ is a twisting morphism.
\end{prp}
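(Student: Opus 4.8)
The plan is to deduce both assertions from functoriality alone, with no computation inside the (co)bar constructions. Recall the three maps involved: $\psi$ is the morphism induced on homology by the morphism of weighted dg operads $\Cob(\varphi)\colon\Cob(\cC')\to\Cob(\cC)$, while $\pi$ and $\kappa$ are obtained from the canonical twisting morphisms into the cobar constructions --- which I will temporarily denote $\kappa_{\cC}\colon\cC\to\Cob(\cC)$ and $\kappa_{\cC'}\colon\cC'\to\Cob(\cC')$, to avoid a clash with $\kappa\colon\cC'\to\cP$ --- by post-composing with the projection onto homology. Equivalently, $\pi$ is the projection of $\cC$ onto its weight-one part followed by the canonical identification of that part with the space of operadic generators of $\cQ=H(\Cob(\cC))$, and likewise for $\kappa$. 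The two inputs I will invoke are: (i) the canonical twisting morphism $\cC\to\Cob(\cC)$ is natural in $\cC$, that is $\Cob(\varphi)\circ\kappa_{\cC'}=\kappa_{\cC}\circ\varphi$ --- immediate, since on cobar-degree one the map $\Cob(\varphi)$ is just $\overline{\varphi}$; and (ii) post-composition with a morphism of dg operads, and pre-composition with a morphism of dg cooperads, are morphisms of convolution pre-Lie algebras, hence carry twisting morphisms to twisting morphisms.

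For commutativity of the square I would chase (i). The image of $\kappa_{\cC'}$ lies in cobar-degree one, and on the weight-one part of $\cC'$ it lands in cycles, where the projection $p_{\cC'}\colon Z\bigl(\Cob(\cC')\bigr)\twoheadrightarrow\cP$ is defined and $\kappa=p_{\cC'}\circ\kappa_{\cC'}$; similarly $\pi=p_{\cC}\circ\kappa_{\cC}$. Combining (i) with functoriality of homology ($p_{\cC}\circ\Cob(\varphi)=\psi\circ p_{\cC'}$ on cycles, since $\psi=H(\Cob(\varphi))$) gives
\[ \psi\circ\kappa = \psi\circ p_{\cC'}\circ\kappa_{\cC'} = p_{\cC}\circ\Cob(\varphi)\circ\kappa_{\cC'} = p_{\cC}\circ\kappa_{\cC}\circ\varphi = \pi\circ\varphi. \]
Thus the square commutes; write $\alpha\colon\cC'\to\cQ$ for the common composite, a morphism of weighted species of degree $-1$.

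It remains to check that $\alpha$ is a twisting morphism, for which I would use the factorisation $\alpha=\psi\circ\kappa$ together with (ii). Since $\psi\colon\cP\to\cQ$ is a morphism of dg operads it commutes with the differentials and with the infinitesimal composition products, so post-composition by $\psi$ is a morphism of dg pre-Lie algebras $\psi_{\ast}\colon\Hom(\cC',\cP)\to\Hom(\cC',\cQ)$. Applying $\psi_{\ast}$ to the Maurer--Cartan equation $\partial\kappa+\kappa\star\kappa=0$ of the twisting morphism $\kappa$ yields $\partial\alpha+\alpha\star\alpha=\psi_{\ast}(\partial\kappa+\kappa\star\kappa)=0$, so $\alpha$ is a twisting morphism. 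Symmetrically, one may instead use $\alpha=\pi\circ\varphi$: pre-composition with the dg cooperad morphism $\varphi$ is a morphism of dg pre-Lie algebras $\varphi^{\ast}\colon\Hom(\cC,\cQ)\to\Hom(\cC',\cQ)$ carrying the twisting morphism $\pi$ to $\alpha$, which gives the same conclusion.

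The one step that is not a soft consequence of functoriality is the bookkeeping packaged into the definition of $\pi$ and $\kappa$: that for a weighted cooperad the weight-one part is canonically the space of generators of the homology operad of its cobar construction, that the cycles-to-homology projection restricts to a well-defined map there, and that this assignment is natural in the cooperad. This is where the weight grading genuinely enters; granting it, both statements are soft consequences of functoriality of $\Cob$ and of the standard behaviour of twisting morphisms under morphisms of (co)operads recalled in (i)--(ii).
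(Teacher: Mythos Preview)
Your argument is correct and matches the paper's approach. The paper is terser: it declares the square ``tautological by definition of $\psi$'' (which is precisely the naturality-of-$\kappa$ chase you spell out), and for the Maurer--Cartan equation it uses only the factorisation $\alpha=\varphi^{\ast}(\pi)$ --- your second, ``symmetric'' argument --- writing simply $\pi\star\pi=0$ (the differentials on $\cC$ and $\cQ=H(\Cob(\cC))$ vanish in this setup, so the $\partial$-term drops out).
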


\begin{proof}
	The commutativity of this diagram is tautological by definition of $\psi$.
	We have $\pi\star\pi=0$ in the convolution algebra $\Hom(\cC,\cQ)$, and $\alpha=\varphi^*(\pi)$ with $\varphi^\ast:\Hom(\cC,\cQ)\to\Hom(\cC',\cQ)$.
	Thus, $\alpha\star\alpha=0$.
\end{proof}

\begin{thm}
	Let $\cX$, and $\cY$ two diagonal species.
	\begin{itemize}
		\item If $\cQ=\cP\circ\cX$ as a left $\cP$-module, then:
		$$H(\Cob(\cC',\cC,\bbone))=H(\cC'\circ\cQ,d_\alpha)=\cX$$
		\item If $\cQ=\cY\circ\cP$ as a right $\cP$-module, then:
		$$H(\Cob(\bbone,\cC,\cC'))=H(\cQ\circ\cC',d_\alpha)=\cY$$
	\end{itemize}
\end{thm}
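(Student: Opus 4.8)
The plan is to prove the two displayed equalities separately. The second one, $H(\cC'\circ\cQ,d_\alpha)=\cX$, is where the freeness hypothesis enters; the first, $H(\Cob(\cC',\cC,\bbone))=H(\cC'\circ\cQ,d_\alpha)$, holds in general. For the latter: regarding $\cC'$ as a right $\cC$-comodule through $\varphi$, I would write the two-sided cobar as a cotensor product, $\Cob(\cC',\cC,\bbone)=\cC'\,\square_\cC\,\Cob(\cC,\cC,\bbone)$. By the cooperad version of \cite{AlgOp}, Lemma 6.5.14, $\Cob(\cC,\cC,\bbone)$ is a cofree left $\cC$-comodule resolution of $\bbone$ supported on $\cC\circ\rCob(\cC)$, so cotensoring identifies $\Cob(\cC',\cC,\bbone)$ with $\cC'\circ\rCob(\cC)$ carrying the differential that combines the internal cobar differential of $\rCob(\cC)$ with the twisting term built from $\varphi$ and the canonical twisting morphism $\cC\to\rCob(\cC)$. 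Filtering this complex by the weight of the $\cC'$-factor, the internal differential preserves the filtration and the twisting term strictly lowers it, so the associated spectral sequence has $E_1=\cC'\circ H(\rCob(\cC))=\cC'\circ\cQ$ with induced differential $d_\alpha$ (since $\alpha=\pi\varphi$ and $\pi$ is the homology class of the canonical twisting morphism of $\cC$). The weight grading being pointwise finite, this converges; running the same filtration on $(\cC'\circ\cQ,d_\alpha)$ produces a spectral sequence with the same $E_1$, and the two coincide, whence $H(\Cob(\cC',\cC,\bbone))=H(\cC'\circ\cQ,d_\alpha)$. The left--right mirror of this argument gives the right-module version, and specializing to $\varphi=\id_{\cC'}$, together with the acyclicity $\Cob(\cC',\cC',\bbone)\simeq\bbone$, yields the auxiliary identity $H(\cC'\circ\cP,d_\kappa)=\bbone$ for the canonical twisting morphism $\kappa\colon\cC'\to\cP$.

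Now assume $\cQ\cong\cP\circ\cX$ as a left $\cP$-module. By the preceding proposition $\alpha=\psi\circ\kappa$ factors through $\cP$, and the $\cP$-action on $\cQ$ is by definition the restriction along $\psi$ of the operadic self-action of $\cQ$; under the isomorphism with $\cP\circ\cX$ this action of $\psi(\cP)$ only ever grafts into the $\cP$-tensorands and leaves the $\cX$-tensorands untouched. Hence, under the associativity identification $\cC'\circ\cQ=(\cC'\circ\cP)\circ\cX$ — valid on the nose, since the twisting differential preserves the number of leaves — the differential $d_\alpha$ is exactly $d_\kappa$ on $\cC'\circ\cP$ extended trivially over $\cX$, that is $(\cC'\circ\cQ,d_\alpha)=(\cC'\circ\cP,d_\kappa)\circ\cX$. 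Over a field homology commutes with this plethysm, so
\[
H(\cC'\circ\cQ,d_\alpha)=H(\cC'\circ\cP,d_\kappa)\circ\cX=\bbone\circ\cX=\cX ,
\]
using the auxiliary identity and $\bbone\circ\cX=\cX$. Combined with the first equality this gives $H(\Cob(\cC',\cC,\bbone))=\cX$, and the right-module statement is the mirror argument starting from $\cQ\cong\cY\circ\cP$.

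I expect the main obstacle to be the claim, used twice above, that the spectral sequence of $\cC'\circ\rCob(\cC)$ agrees with that of $(\cC'\circ\cQ,d_\alpha)$ beyond the first page — equivalently, that replacing the cobar construction $\rCob(\cC)$ by its homology $\cQ$ introduces no higher corrections to the twisting differential. The key input is that the canonical twisting morphism $\cC\to\rCob(\cC)$ takes values in the generators $\bar\cC=\rCob_1(\cC)$, all of which are cycles for the internal cobar differential (that differential lands in $\rCob_0(\cC)=\bbone$, concentrated in weight zero), so that grafting such a generator onto a cycle again produces a cycle; this rigidity is what controls the higher differentials. When $\cC$, equivalently $\cQ$, is Koszul — the situation in every application in this paper — the issue disappears entirely, since then $\rCob(\cC)\to\cQ$ is a quasi-isomorphism of operads and the identification $\Cob(\cC',\cC,\bbone)\cong(\cC'\circ\cQ,d_\alpha)$ is immediate. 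Finally, diagonality of $\cX$ (resp.\ $\cY$) plays no role in the homology computation itself; it only pins down the residual homological grading so that the output is the species $\cX$ (resp.\ $\cY$) as stated, matching the diagonality condition of the main theorem.
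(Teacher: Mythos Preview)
Your computation of the second equality, $H(\cC'\circ\cQ,d_\alpha)=\cX$, is correct and is essentially the paper's computation of the $E^2$ page: under the freeness hypothesis one has $(\cC'\circ\cQ,d_\alpha)\cong(\cC'\circ\cP,d_\kappa)\circ\cX$, and the twisted complex $(\cC'\circ\cP,d_\kappa)$ is acyclic. The gap is in your treatment of the first equality. You argue that the spectral sequence attached to $\Cob(\cC',\cC,\bbone)$ and the one attached to $(\cC'\circ\cQ,d_\alpha)$ share the same $E^1$ page, and conclude that they have the same abutment; but agreement on $E^1$ does not force agreement on $E^\infty$. You flag this yourself as ``the main obstacle'' and only resolve it under an extra Koszulness hypothesis on $\cC$ that the theorem does not assume.

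The paper closes this gap precisely via the diagonality of $\cX$, which you dismiss as playing no role. The paper runs a \emph{single} spectral sequence on $\Cob(\cC',\cC,\bbone)$, obtaining $E^1=(\cC'\circ\cQ,d_\alpha)$ and hence $E^2=H(\cC'\circ\cQ,d_\alpha)=\cX$ by your own computation. Because $\cX$ is diagonal, every higher differential $d^r$ for $r\geq 2$ has source or target off the diagonal and must vanish, so the spectral sequence collapses at $E^2$ and $H(\Cob(\cC',\cC,\bbone))=\cX$. Both displayed equalities then fall out simultaneously. Diagonality is therefore not cosmetic: it is exactly the hypothesis that replaces your unproven comparison of the two spectral sequences beyond the first page.
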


\begin{proof}
	Let us use a spectral sequence argument. Let us endow $\Cob(\cC',\cC,\bbone)$ with the filtration induced by concentrating $\cC$ in weight $1$, and $\cC'$ and $\bbone$ in weight $0$. 
	The differential $d_\Cob$ decreases along this filtration and we can study the induced spectral sequence. By definition, we have that:
	$$E^1=(\cC'\circ\cQ,d_\alpha)$$
	Since $\cQ=\cP\circ \cX$, we have that:
	$$E^1=(\cC'\circ\cQ,d_\alpha)=(\cC'\circ\cP\circ\cX,d_\kappa)$$
	Because $H(\cC'\circ\cP,d_\kappa)=\bbone$, we have that $E^2=\cX$. 
	Since $\cX$ is diagonal, the spectral sequence collapses at page $2$.
	Moreover, the spectral sequence converges since the grading is pointwise bounded.\\
	The proof of the second claim is the same.
\end{proof}

\begin{thm}
	Assume that $\cP$ and $\cQ$ are diagonal.
	\begin{itemize}
		\item If $H(\Cob(\cC',\cC,\bbone))=\cX$ with $\cX$ diagonal, then $\cQ=\cP\circ\cX$ as left $\cP$-module. 
		\item If $H(\Cob(\bbone,\cC,\cC'))=\cY$ with $\cY$ diagonal, then $\cQ=\cY\circ\cP$ as right $\cP$-module.
	\end{itemize}
\end{thm}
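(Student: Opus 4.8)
This is the converse of the preceding theorem, under the extra diagonality hypotheses, and the plan is to reduce it to the homological freeness criterion recalled in the introduction, by identifying $H(\Cob(\cC',\cC,\bbone))$ with a $\mathrm{Tor}$ group of $\cQ$ seen, through $\psi$, as a left $\cP$-module.

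First I will establish that $H(\Cob(\cC',\cC,\bbone))$ is isomorphic to $\mathrm{Tor}^{\cP}(\bbone,\cQ):=H(\BB(\bbone,\cP,\cQ))$. For this I run the spectral sequence used in the proof of the previous theorem; its first page is $E^1=(\cC'\circ\cQ,d_\alpha)$. Since the diagonal twisting morphism $\alpha$ factors as $\cC'\xrightarrow{\kappa}\cP\xrightarrow{\psi}\cQ$, the complex $(\cC'\circ\cQ,d_\alpha)$ is obtained from $(\cC'\circ\cP,d_\kappa)$ by the relative composite product $-\circ_{\cP}\cQ$. Now $(\cC'\circ\cP,d_\kappa)$ is a semifree resolution of $\bbone$ as a right $\cP$-module — its underlying right $\cP$-module is free on $\cC'$, and $H(\cC'\circ\cP,d_\kappa)=\bbone$, a fact already invoked in the previous proof — so $E^2=H(\cC'\circ\cQ,d_\alpha)=\mathrm{Tor}^{\cP}(\bbone,\cQ)$. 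The diagonality hypotheses on $\cP$ and $\cQ$ are exactly what force the spectral sequence to degenerate at $E^2$ — this is the analogue, in this direction, of the use of the diagonality of $\cX$ in the previous proof — whence $\mathrm{Tor}^{\cP}(\bbone,\cQ)\cong H(\Cob(\cC',\cC,\bbone))=\cX$.

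It then remains to translate ``diagonal'' into ``concentrated in homological degree $0$'': since $\cP$ and $\cQ$ are diagonal, the weight and homological gradings on $\BB(\bbone,\cP,\cQ)$ are coupled so that a homology class of positive homological degree lies strictly off the diagonal. Hence $\mathrm{Tor}^{\cP}(\bbone,\cQ)\cong\cX$ being diagonal is equivalent to $\mathrm{Tor}^{\cP}_{n}(\bbone,\cQ)=0$ for $n>0$. By the homological freeness criterion, $\cQ$ is then a free left $\cP$-module, necessarily on the generators $\mathrm{Tor}^{\cP}_0(\bbone,\cQ)=\cX$, that is, $\cQ\cong\cP\circ\cX$. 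The right-module statement is obtained in the same way, now using that $(\cP\circ\cC',d_\kappa)$ is a semifree resolution of $\bbone$ as a left $\cP$-module and the corresponding filtration on $\Cob(\bbone,\cC,\cC')$.

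The step I expect to be the crux is the grading bookkeeping behind the last two paragraphs: making precise, within the paper's conventions, why the diagonality of $\cP$ and $\cQ$ simultaneously forces the spectral sequence to degenerate at $E^2$ and makes ``$H(\Cob(\cC',\cC,\bbone))$ diagonal'' equivalent to ``$\mathrm{Tor}^{\cP}(\bbone,\cQ)$ concentrated in homological degree $0$''. Everything else — the identification of $\mathrm{Tor}^\cP(\bbone,\cQ)$ through the Koszul resolution $(\cC'\circ\cP,d_\kappa)\simeq\bbone$ and the final appeal to the homological freeness criterion — is routine.
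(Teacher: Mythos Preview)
Your approach is sound but genuinely different from the paper's. The paper never passes through $\mathrm{Tor}^{\cP}(\bbone,\cQ)$ or invokes the classical freeness criterion; instead it introduces the \emph{chained cobar construction} $\Cob(\bbone,\cC',\cC',\cC,\bbone)$, a bicosimplicial object with $\Cob_{i,j}={\cC'}^{\circ i}\circ\cC'\circ\cC^{\circ j}$, and computes its total homology via the two spectral sequences attached to the bicomplex. Filtering by $j$ gives $E^1=H(\Cob(\bbone,\cC',\cC'))\circ H(\rCob(\cC))=\bbone\circ\cQ=\cQ$, which collapses because $\cQ$ is diagonal; filtering by $i$ gives $E^1=H(\rCob(\cC'))\circ H(\Cob(\cC',\cC,\bbone))=\cP\circ\cX$, which collapses because $\cP$ and $\cX$ are diagonal. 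Comparing the two abutments yields $\cQ\cong\cP\circ\cX$ directly.

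As for what each buys: the paper's double-filtration trick is short and entirely self-contained---the two collapse statements are immediate from diagonality, so the ``grading bookkeeping'' you flag as the crux simply does not arise. Your route, by contrast, makes the identification $H(\Cob(\cC',\cC,\bbone))\cong\mathrm{Tor}^{\cP}(\bbone,\cQ)$ explicit and, by appealing to the freeness criterion, delivers the left $\cP$-module structure on the nose; the paper's argument literally only produces an isomorphism of bigraded species and leaves the compatibility with the $\cP$-action implicit in the construction. (A small sharpening of your argument: the degeneration at $E^2$ of the spectral sequence on $\Cob(\cC',\cC,\bbone)$ needs only that $\cQ$ be diagonal, since then $E^1=\cC'\circ\cQ$ is already concentrated on a single line; the diagonality of $\cP$ is only used afterwards, in translating ``$\cX$ diagonal'' into ``$\mathrm{Tor}$ in degree $0$''.)
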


\begin{proof}
	Let us consider the chain complex $\Cob(\bbone,\cC',\cC',\cC,\bbone)$, we recall that:
	$$\Cob_{i,j}(\bbone,\cC',\cC',\cC,\bbone)=\bbone\circ {\cC'}^{\circ i}\circ \cC'\circ \cC^{\circ j}\circ \bbone$$
	The first page of the spectral sequence induced by the $j$-grading is $H(\Cob(\bbone,\cC',\cC'))\circ H(\Cob(\bbone,\cC,\bbone))$ which is equal to $\cQ$. 
	Since $\cQ$ is diagonal, the spectral sequence collapses at the first page, and: 
	$$H(\Cob(\bbone,\cC',\cC',\cC,\bbone))=\cQ.$$
	Similarly, the first page of the spectral sequence induced by the $i$-grading is $H(\Cob(\bbone,\cC',\bbone))\circ H(\Cob(\cC',\cC,\bbone))$ which is equal to $\cP\circ\cX$. Since $\cP$, and $\cX$ are diagonal, it collapses at the first page, and 
	$$H(\Cob(\bbone,\cC',\cC',\cC,\bbone))=\cP\circ \cX.$$
	Thus:
	$$\cQ=\cP\circ\cX$$
	The proof of the second claim is the same.
\end{proof}

We recall that if $\cC'$ (resp. $\cC$) is Koszul, and concentrated in homological degree $0$, then $\cP$ (resp. $\cQ$) is diagonal. Thus, we get the following corollary: 

\begin{crl}\label{crl:main}
	Assume that $\cC'$ and $\cC$ are Koszul, and concentrated in homological degree $0$. 
	\begin{itemize}
		\item We have $\cQ$ is free as a left $\cP$-module if and only if $H(\Cob(\cC',\cC,\bbone))$ is diagonal. Moreover, in this case we have $\cQ=\cP\circ H(\Cob(\cC',\cC,\bbone))$.
		\item We have $\cQ$ is free as a right $\cP$-module if and only if $H(\Cob(\bbone,\cC,\cC'))$ is diagonal. Moreover, in this case we have $\cQ=H(\Cob(\bbone,\cC,\cC'))\circ\cP$.
	\end{itemize}
\end{crl}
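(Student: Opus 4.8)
The plan is to combine the two preceding theorems with the quoted Proposition 7.3.2 on Koszulness. First I would observe that the hypothesis ``$\cC'$ and $\cC$ are Koszul and concentrated in homological degree $0$'' is exactly what is needed to feed into that proposition: it gives that $\cP = H(\Cob(\cC'))$ and $\cQ = H(\Cob(\cC))$ are diagonal. With diagonality of $\cP$ and $\cQ$ in hand, the two implications of the corollary are essentially a restatement of the two earlier theorems, one giving each direction of the ``if and only if''.

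For the forward direction, suppose $\cQ$ is free as a left $\cP$-module. A free left $\cP$-module is by definition of the form $\cP \circ \cX$ for some species $\cX$ (the generators); since $\cP$ and $\cQ$ are diagonal and the plethysm of diagonal species is diagonal, $\cX$ must be concentrated in the diagonal as well. Then the first theorem above (the one computing $H(\Cob(\cC',\cC,\bbone))$ from a free left-module structure) applies with this $\cX$ and yields $H(\Cob(\cC',\cC,\bbone)) = \cX$, which is diagonal, and moreover identifies it with the generating species. Conversely, if $H(\Cob(\cC',\cC,\bbone))$ is diagonal, set $\cX = H(\Cob(\cC',\cC,\bbone))$; then the second theorem above (the converse direction, valid precisely because $\cP$ and $\cQ$ are diagonal) gives $\cQ = \cP \circ \cX$ as a left $\cP$-module, i.e. $\cQ$ is free on $\cX = H(\Cob(\cC',\cC,\bbone))$. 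The right-module statement is proved identically, swapping $\Cob(\cC',\cC,\bbone)$ for $\Cob(\bbone,\cC,\cC')$, $\cP\circ\cX$ for $\cY\circ\cP$, and invoking the right-module halves of the two theorems.

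I do not expect a serious obstacle here, since all the analytic content (the spectral sequence arguments, convergence via pointwise-bounded weight grading, the chained cobar construction) has already been carried out in the two theorems and their proofs. The only mild point to be careful about is the extraction of the ``moreover'' clause: one should note that in the forward direction the isomorphism $\cQ \cong \cP \circ \cX$ is compatible with the identification of $\cX$ with $H(\Cob(\cC',\cC,\bbone))$ coming from the first theorem, so that the generating species really is $H(\Cob(\cC',\cC,\bbone))$ and not merely something abstractly isomorphic to it. This is immediate from tracking the $E_\infty$-page of the spectral sequence used in that theorem, so the corollary follows by assembling the cited results.
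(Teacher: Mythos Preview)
Your proposal is correct and follows the same approach as the paper: invoke the Koszul hypothesis (via the quoted Proposition 7.3.2) to obtain diagonality of $\cP$ and $\cQ$, then read off each direction of the equivalence from the two preceding theorems. The only slightly loose step is your justification that $\cX$ is diagonal in the forward direction---``plethysm of diagonals is diagonal'' is the wrong implication; the clean argument is that the unit $\bbone\hookrightarrow\cP$ realises $\cX$ as a bigraded direct summand of $\cP\circ\cX=\cQ$, hence $\cX$ inherits diagonality from $\cQ$.
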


\section{Application of the main result to decorated partition posets}

Let us apply this freeness theorem to decorated partition posets. We refer to \cite{OpPOS}  for the actual definition of the posets we consider.
Moreover, the following result is shown in \cite{OpPOS}:
\begin{thm}[\cite{OpPOS} Proposition 4.1]
	Let $\cP$ a set-operad, $\cC$ its dual cooperad, and $\Com^\vee$ the commutative cooperad, then:
	$$h(\Pi^\cP_+)=H(\Cob(\Com^\vee,\cC,\bbone))\qquad \text{ and } \qquad h( ^\cP\Pi^+)=H(\Cob(\bbone,\cC,\Com^\vee)).$$
\end{thm}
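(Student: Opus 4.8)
The plan is to reduce the statement to the by now classical dictionary, going back to Fresse and Vallette, between the order complexes of decorated partition posets and operadic (co)bar constructions, and then to transport it through the arity-wise linear duality between the set-operad $\cP$ and its dual cooperad $\cC=\cP^\vee$ (and, on the commutative side, between the terminal set-operad $\Com$ and $\Com^\vee$).

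First I would unwind the definition of $h(\Pi^\cP_+)$ from \cite{OpPOS}: up to a suspension it is the reduced simplicial cohomology of the order complex of the proper part of the interval $[\hat 0,\hat 1]$ of $\Pi^\cP_+(N)$, assembled over all finite sets $N$ into a linear species. The combinatorial core is then the observation that a non-degenerate chain $\hat 0<x_1<\dots<x_r<\hat 1$ in $\Pi^\cP(N)$ is exactly the datum of a partition of $N$ into nonempty blocks together with, on each block $B$, an $r$-fold composable configuration of $\cP$-operations, i.e.\ an element of $\cP^{\circ r}(B)$; here the outermost, unordered, layer is recorded by $\Com$ through the canonical map from $\cP$ to the terminal set-operad. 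Dualising block by block, the degree-$r$ simplicial cochains on the order complex form $(\Com\circ\cP^{\circ r})^\vee(N)=\Com^\vee\circ\cC^{\circ r}(N)=\Cob_r(\Com^\vee,\cC,\bbone)(N)$, and ``adding a vertex back to a chain'' becomes precisely the alternating sum of cofaces of $\Cob$: co-merging the two top blocks by the cocommutative cocomposition gives the first coface, splitting one layer of operations by a cocomposition of $\cC$ gives the internal ones, and the coaugmentation $\bbone\hookrightarrow\cC$ at the bottom gives the last. A chain with a trivial jump is a degenerate simplex and maps to the counit $\cC\to\bbone$, which the normalisation kills. This identifies the normalised simplicial cochain complex of the order complex with the reduced cobar complex, and matching the suspension shift gives $h(\Pi^\cP_+)=H(\Cob(\Com^\vee,\cC,\bbone))$; phrased homologically instead, one would identify the simplicial chain complex with $\BB(\Com,\cP,\bbone)$ and apply $H(C^\vee)\cong H(C)^\vee$.

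The second identity is the mirror image: $^\cP\Pi^+$ is defined like $\Pi^\cP_+$ but with the extra extremal element adjoined on the opposite (leaf) side --- equivalently one reads the decorated trees from the leaves rather than from the root --- so the same dictionary places the $\Com^\vee$ factor on the right and yields $H(\Cob(\bbone,\cC,\Com^\vee))$. The step I expect to cost the most care is the chain-level identification in the second paragraph: one has to pin down the precise conventions of \cite{OpPOS} --- which partition plays the role of $\hat 1$, what the decoration ``$+$'' contributes, and the reducedness hypothesis $\cP(1)=\{\ast\}$ that makes both the interval and the normalisation behave --- and then check that the bijection between non-degenerate chains and normalised cobar generators is natural in $N$ and that the simplicial signs together with the suspension match those already built into the cobar differential and into the definition of $h$. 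Everything past that point is formal.
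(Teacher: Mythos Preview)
The paper does not prove this statement. It is quoted verbatim as a result of Delcroix-Oger and Dupont, with the explicit attribution ``the following result is shown in \cite{OpPOS}'' and the label ``\cite{OpPOS} Proposition 4.1''; the paper even refers the reader to \cite{OpPOS} for the definitions of the posets $\Pi^\cP_+$ and $^\cP\Pi^+$. There is therefore no proof here to compare your proposal against.

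That said, your sketch is the standard route and is essentially the Fresse--Vallette argument (see \cite{KDPartHom} in the paper's bibliography) adapted to the augmented/decorated setting of \cite{OpPOS}: identify maximal chains in the decorated partition poset with iterated compositions $\Com\circ\cP^{\circ r}$, dualise to land in $\Com^\vee\circ\cC^{\circ r}=\Cob_r(\Com^\vee,\cC,\bbone)$, and match the simplicial (co)boundary with the cobar differential. The cautions you flag --- conventions for $\hat 0$/$\hat 1$, what the ``$+$'' contributes, the reducedness $\cP(1)=\{\ast\}$, and the sign/suspension bookkeeping --- are exactly the points where one must consult the definitions in \cite{OpPOS}, and they are the only places real work happens. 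If your goal is to reconstruct the proof, the reference to chase is Proposition~4.1 of \cite{OpPOS} itself, not the present paper.
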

We summarise the poset we study with some of their properties in Table~\ref{tb:sum}. 

\begin{figure}[h]\caption{Summary table of posets considered in the article.}\label{tb:sum}
	\center
	\begin{tabular}{c|c|c|c|c}
		Poset & Associated & Koszul & Cohen-Macaulay & Combinatorial basis\\
		& set-operad & dual & & of the suspended\\
		& & & & homology\\
		\hline
		$\Pi^\Perm_+$ & $\Perm$ & $\PL$ & Yes & None\\[5pt]
		$ ^\DCom\Pi^+$ & $\DCom$ & $\DLie$ & Yes & None\\[5pt]
		$\Pi^\DCom_+$ & $\DCom$ & $\DLie$ & Yes & None\\[5pt]
		$\Pi^\ComTriAss_+$ & $\ComTriAss$ & $\PostLie$ & Yes & Binary trees\\[5pt]
		$ ^\NAC\Pi^+$ & $\NAC$ & $\NAC^!$ & No & unknown \\[5pt]
		$ ^\Dup\Pi^+$ & $\Dup$ & $\Dup^!$ & Conjecturally yes & None\\[5pt]
		$ \Pi^\TriAss_+$ & $\TriAss$ & $\TriDend$ & Conjecturally yes & None\\[5pt]
	\end{tabular}
\end{figure}

\subsection{The permutative operad}

Let us consider $\Pi^\Perm$. It is already shown in \cite{OpPOS}  that $\Pi^\Perm_+$ is Cohen-Macaulay. We recover this result since $\PL$ is free over $\Lie$ \cite{FPL}. 
Moreover, since it is shown in \cite{FPLr} that $\PL=\Lie\circ\cT(\CLie)$, we have the following result:

\begin{thm}\label{thm:pl}
	The right $\PL$-module $\Lambda\hat{h}(\Pi^\Perm)$ is isomorphic to $\cT(\CLie)$ with the module structure induced by the inclusion $\cT(\CLie)\hookrightarrow\PL$ given in \cite{FPL,FPLr}.
\end{thm}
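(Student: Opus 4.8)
The plan is to combine the freeness of $\PL$ over $\Lie$ with Corollary~\ref{crl:main} and \cite[Proposition~4.1]{OpPOS}, and then to track the right $\PL$-module structure through the resulting identifications. First I would record the operadic input: the unique surjection of operads $\Perm\twoheadrightarrow\Com$ dualizes to a morphism of cooperads $\Com^\vee\to\cC$, where $\cC$ denotes the dual cooperad of $\Perm$. Since $\Perm$ and $\Com$ are Koszul and concentrated in homological degree $0$, so are $\cC$ and $\Com^\vee$; moreover $H(\Cob(\Com^\vee))=\Lie$ and $H(\Cob(\cC))=\Perm^!=\PL$, the induced operad morphism $\Lie\to\PL$ being the canonical inclusion. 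By \cite[Proposition~4.1]{OpPOS} we have $\Lambda\hat{h}(\Pi^\Perm)=H(\Cob(\Com^\vee,\cC,\bbone))$, the suspension $\Lambda$ being exactly the shift relating the two, so the whole statement is about this species.

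Next I would feed in the known freeness results. By \cite{FPL} the operad $\PL$ is free as a left $\Lie$-module, so the left-module case of Corollary~\ref{crl:main} gives that $H(\Cob(\Com^\vee,\cC,\bbone))$ is diagonal — which recovers the Cohen-Macaulayness of $\Pi^\Perm_+$ — and $\PL\cong\Lie\circ H(\Cob(\Com^\vee,\cC,\bbone))$ as left $\Lie$-modules. Since \cite{FPLr} also presents $\PL\cong\Lie\circ\cT(\CLie)$ as a free left $\Lie$-module, through the inclusion $\cT(\CLie)\hookrightarrow\PL$, and since the generating species of a free left $\Lie$-module is canonically $\PL$ modulo the image of the structure map $\bar{\Lie}\circ\PL\to\PL$, hence unique up to isomorphism, comparing the two presentations yields an isomorphism of species $H(\Cob(\Com^\vee,\cC,\bbone))\cong\cT(\CLie)$. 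This already establishes Theorem~\ref{thm:pl} of the introduction.

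It then remains to promote this to an isomorphism of right $\PL$-modules. Here I would first note that $\Cob(\Com^\vee,\cC,\bbone)$ is a dg right module over the dg operad $\rCob(\cC)=\Cob(\bbone,\cC,\bbone)$, the action concatenating cobar words at the top; taking homology, $H(\Cob(\Com^\vee,\cC,\bbone))$ is a right $H(\Cob(\cC))=\PL$-module, and this is the right $\PL$-module structure appearing in the statement (equivalently, the one coming from the operadic structure of the posets $\Pi^\Perm$). Re-running the proof of Corollary~\ref{crl:main}, via the chained cobar complex $\Cob(\bbone,\Com^\vee,\Com^\vee,\cC,\bbone)$, with this right action present upgrades $\PL\cong\Lie\circ H(\Cob(\Com^\vee,\cC,\bbone))$ to an isomorphism of $\Lie$-$\PL$-bimodules. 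Finally, as the left $\Lie$- and right $\PL$-actions on $\PL$ commute, the right $\PL$-action on the generating species of the free left $\Lie$-module $\PL$ is determined by its restriction to the generators; transporting it along $\PL\cong\Lie\circ\cT(\CLie)$ identifies it with the right $\PL$-action on $\cT(\CLie)$ induced by $\cT(\CLie)\hookrightarrow\PL$. Hence the species isomorphism of the previous paragraph is an isomorphism of right $\PL$-modules, which is the assertion.

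The hard part will be this last step: several a priori distinct right $\PL$-module structures enter — the one from the operadic posets, the one from the $\rCob(\cC)$-action on the chained cobar complex, and the one induced on the generators of a free left $\Lie$-module — and the argument turns on showing they all coincide, all the while normalising the suspension $\Lambda$ so that degrees and signs are consistent throughout. One should also confirm that the decomposition of \cite{FPLr} is realised precisely through the stated inclusion $\cT(\CLie)\hookrightarrow\PL$, so that the phrase ``module structure induced by the inclusion'' is unambiguous.
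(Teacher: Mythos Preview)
Your proposal is correct and follows the same route as the paper: invoke $\PL=\Lie\circ\cT(\CLie)$ from \cite{FPL,FPLr}, apply Corollary~\ref{crl:main} to the morphism $\Com^\vee\to\Perm^\vee$, and identify the resulting cobar homology with $\Lambda\hat h(\Pi^\Perm)$ via \cite[Proposition~4.1]{OpPOS}. In fact you are considerably more careful than the paper about the right $\PL$-module structure---the paper treats the theorem as an immediate consequence of the corollary and the cited freeness results, without spelling out the bimodule compatibility you track in your last paragraph.
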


\begin{rmk}
	In particular, we can check that the $\Sym_3$-representation $\cT(\CLie)(3)$ does not admit any basis stable by the action. Thus, there is no ``combinatorial basis'' describing $\Lambda\hat{h}(\Pi^\Perm)$.
\end{rmk}

\subsection{The double commutative operad}

It is shown in \cite{FreeOp}  that $\DLie$ is free both as a left and a right $\Lie$-module. 
Indeed, the Gröbner basis of $\DLie$ provided in \cite{FreeOp}[Theorem 2] can be used in \cite{FreeOp}[Theorem 4] for both left and right freeness. 
Thus, we have $\cX$, and $\cY$ such that $\DLie=\Lie\circ\cX$, and $\DLie=\cY\circ\Lie$.

\begin{thm}\label{thm:dlie}
	Both $ ^\DCom\Pi^+$ and $\Pi^\DCom_+$ are Cohen-Macaulay.
\end{thm}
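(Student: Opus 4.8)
The plan is to read off both statements from Corollary~\ref{crl:main}, from Proposition~4.1 of \cite{OpPOS}, and from the freeness of $\DLie$ over $\Lie$ recalled above.

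First I would cast the data into the form required by Corollary~\ref{crl:main}. Let $\cC = \DCom^\vee$ be the cooperad dual to the set-operad $\DCom$; since $\DCom$ is quadratic, Koszul and concentrated in homological degree $0$, the same holds for $\cC$, and $H(\Cob(\cC)) = \DCom^! = \DLie$. Likewise $\cC' = \Com^\vee$ is Koszul, concentrated in homological degree $0$, and $H(\Cob(\cC')) = \Com^! = \Lie$. The unique morphism of set-operads $\DCom \to \Com$ dualises to a morphism of weighted cooperads $\varphi\colon \cC' \to \cC$, and the induced morphism on cobar homology $\psi\colon \Lie \to \DLie$ is the canonical inclusion of $\Lie$ into $\DLie$, namely the one for which \cite{FreeOp} establishes freeness. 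Thus we are in the situation of Corollary~\ref{crl:main} with $\cP = \Lie$ and $\cQ = \DLie$.

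Next I would feed in the freeness. As recalled above, \cite{FreeOp} provides species $\cX$ and $\cY$ with $\DLie = \Lie \circ \cX$ as a left $\Lie$-module and $\DLie = \cY \circ \Lie$ as a right $\Lie$-module; in particular $\DLie$ is free both as a left and as a right $\Lie$-module. By the two parts of Corollary~\ref{crl:main}, the species $H(\Cob(\cC', \cC, \bbone)) = H(\Cob(\Com^\vee, \DCom^\vee, \bbone))$ and $H(\Cob(\bbone, \cC, \cC')) = H(\Cob(\bbone, \DCom^\vee, \Com^\vee))$ are both diagonal, and they are isomorphic to $\cX$ and $\cY$ respectively. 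By Proposition~4.1 of \cite{OpPOS}, the former is $h(\Pi^\DCom_+)$ and the latter is $h({}^\DCom\Pi^+)$, so both poset homology species are diagonal. Finally, a decorated partition poset is Cohen-Macaulay exactly when its poset homology species is diagonal: by the recursive structure of such posets, every open interval decomposes in terms of maximal intervals of smaller posets of the same type, so concentration of the homology in top degree transfers to all intervals (cf. \cite{OpPOS}). Hence $\Pi^\DCom_+$ and ${}^\DCom\Pi^+$ are Cohen-Macaulay.

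The only real work is bookkeeping, and this is also what I expect to be the main obstacle: one must confirm that $\varphi$ induces on cobar homology precisely the $\Lie$-module structure on $\DLie$ for which \cite{FreeOp} proves left and right freeness, and that $\DCom$ (hence $\cC$) genuinely meets the Koszulness and homological-degree hypotheses needed to apply Corollary~\ref{crl:main}. Once conventions are matched the argument is immediate.
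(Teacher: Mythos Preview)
Your proposal is correct and follows essentially the same route as the paper: invoke the left and right freeness of $\DLie$ over $\Lie$ from \cite{FreeOp}, apply Corollary~\ref{crl:main} to conclude that the relevant cobar homologies are diagonal, and translate this via \cite{OpPOS} into Cohen--Macaulayness of the two posets. The paper adds a remark (immediately after the theorem) that the particular choice of map $\Lie\hookrightarrow\DLie$ is immaterial since all such maps are related by automorphisms of $\DLie$, which directly addresses the bookkeeping concern you flag at the end.
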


\begin{rmk}
	Once again, one cannot hope for a ``combinatorial basis''. Indeed, one may check that both $\cX(2)$ and $\cY(2)$ is the signature representation of $\Sym_2$.
\end{rmk}

One may notice that we did not carefully state which map $\Lie\hookrightarrow\DLie$ we were using. 
This is irrelevant in this specific case, since all such map are equivalent in the sense that for any two such maps $f,g$, we have $\varphi$ an automorphism of $\DLie$ such that $f=\varphi(g)$.

\subsection{The commutative triassociative operad}

It was already pointed out in \cite{PostLie}  where $\PostLie$ is introduced that $\PostLie=\Lie\circ\Mag$ as species. One can easily check that this still hold when considering either of the two natural left $\Lie$-module structures of $\PostLie$.

\begin{thm}\label{thm:postl}
	The poset $\Pi^\ComTriAss_+$ is Cohen-Macaulay. Moreover, $\Lambda\hat{h}(\Pi^\ComTriAss)=\Mag$.
\end{thm}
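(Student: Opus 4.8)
The plan is to apply Corollary~\ref{crl:main} to the morphism of cooperads dual to the inclusion $\Lie\hookrightarrow\PostLie$, together with the identification of poset homology from \cite{OpPOS} Proposition 4.1. Concretely, write $\cC'$ for the Koszul dual cooperad of $\Lie$ (so that $\cP=H(\rCob(\cC'))=\Lie$, using that $\Lie$ is Koszul and concentrated in homological degree $0$), and $\cC$ for the Koszul dual cooperad of $\ComTriAss$ (so that $\cQ=H(\rCob(\cC))=\PostLie$, since $\ComTriAss$ is Koszul with Koszul dual $\PostLie$ as recorded in Table~\ref{tb:sum}). The morphism of operads $\psi:\Lie\to\PostLie$ we use is one of the two natural inclusions coming from the $\PostLie$ structure; the dual cooperad morphism $\varphi:\cC'\to\cC$ furnishes exactly the data required by Corollary~\ref{crl:main}.

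First I would invoke the fact, noted in the paragraph preceding the theorem and originally in \cite{PostLie}, that $\PostLie=\Lie\circ\Mag$ as species, and that this decomposition is compatible with the chosen left $\Lie$-module structure on $\PostLie$; that is, $\PostLie$ is free as a left $\Lie$-module on the species $\Mag$. By the forward direction of Corollary~\ref{crl:main} (the ``if'' part), freeness of $\cQ$ as a left $\cP$-module forces $H(\Cob(\cC',\cC,\bbone))$ to be diagonal, and moreover identifies it: $H(\Cob(\cC',\cC,\bbone))=\Mag$. Next I would translate this into the poset statement: by \cite{OpPOS} Proposition 4.1, $h(\Pi^\ComTriAss_+)=H(\Cob(\Com^\vee,\cC,\bbone))$, and since the Koszul dual cooperad of $\Lie$ is $\Com^\vee$ (up to the usual suspension/operadic twist), we have $\cC'=\Com^\vee$ and hence $h(\Pi^\ComTriAss_+)=H(\Cob(\cC',\cC,\bbone))=\Mag$, concentrated in a single (diagonal) degree. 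Concentration in the diagonal is precisely the statement that the poset $\Pi^\ComTriAss_+$ is Cohen-Macaulay, and reading off the top homology gives $\Lambda\hat h(\Pi^\ComTriAss)=\Mag$ up to suspension, matching the ``binary trees'' entry of Table~\ref{tb:sum} since $\Mag$ is spanned by planar binary trees.

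The main obstacle I anticipate is bookkeeping rather than conceptual: one must check carefully that the species-level isomorphism $\PostLie\cong\Lie\circ\Mag$ from \cite{PostLie} genuinely upgrades to an isomorphism of left $\Lie$-modules for the specific inclusion $\Lie\hookrightarrow\PostLie$ being used — i.e. that $\Mag$ really is a free generating species and not merely an additive complement. The sentence preceding the theorem asserts ``one can easily check that this still holds when considering either of the two natural left $\Lie$-module structures,'' so I would either cite that or give the short argument: the bracket $[-,-]$ of $\PostLie$ together with the magmatic product $\triangleright$ generate $\PostLie$, the relations are the (post-)Lie relations, and a PBW-type / Gröbner-basis normal-form argument (or the explicit rewriting in \cite{PostLie}) shows every element of $\PostLie$ is uniquely a $\Lie$-combination of $\Mag$-monomials. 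Once left $\Lie$-freeness on $\Mag$ is in hand, the rest is a direct application of the machinery above, with the only remaining care being the suspension conventions relating $\Lambda\hat h$, the reduced cobar homology, and the normalization of $\Com^\vee$ as the Koszul dual of $\Lie$.
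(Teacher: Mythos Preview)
Your proposal is correct and follows exactly the paper's approach: use the left $\Lie$-freeness $\PostLie=\Lie\circ\Mag$ from \cite{PostLie}, apply Corollary~\ref{crl:main} to conclude $H(\Cob(\Com^\vee,\cC,\bbone))=\Mag$ is diagonal, and then identify this with the poset homology via \cite{OpPOS} Proposition~4.1. The only point you flag as an obstacle (that the species decomposition upgrades to a left $\Lie$-module decomposition) is precisely what the paper asserts just before the theorem and treats as the entire content of the proof.
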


\subsection{Other operads}

The operad $\NAC$ introduced in \cite{OpPOS} was also considered in \cite{BinSetOP}, and before in the PhD thesis of the author, where it is proven to defined by distributive law $\ANil\circ\CMag$. In particular, the morphism $\Lie\to\NAC$ is not injective, and $\NAC$ is not free as either a left or a right $\Lie$-module. Thus, although there is no obstruction when computing the Euler characteristic of $\check{h}( ^\NAC\Pi^+)$, it is not concentrated in the diagonal.

It is not know if $\Dup^!$ is left free over $\Lie$ nor if $\TriDend$ is right free over $\Lie$. 
It should be possible to answer those question using \cite{FreeOp}[Theorem 4] with a well-chosen Gröbner basis. 
Moreover, a positive answer would show the Cohen-Macaulayness of the corresponding poset, $ ^\Dup\Pi^+$ or $\Pi^\TriAss_+$, additionally, an explicit computation of the generators would compute the according homology groups.
One should notice that it is not possible to get ``combinatorial basis'' for those homology groups, since the dimension of the isotypic component of the signature representation is higher than the one of the trivial representation in the arity $2$ part of the suspended homology of those posets.

\subsection*{Funding}
{\small The author is funded by a postdoctoral fellowship of the ERC Starting Grant “Low Regularity Dynamics via Decorated Trees” (LoRDeT) of Yvain Bruned (grant agreement No. 101075208).}

\bibliographystyle{plain}
\bibliography{Bibly}
\textsc{Institut Élie Cartan de Lorraine, UMR 7502, Faculté des Sciences et Technologies, Boulevard des Aiguillettes, 54506 Vandœuvre-lès-Nancy, France}
\textit{Email address:} \texttt{paul.laubie@univ-lorraine.fr}

\end{document}